\definecolor{c20}{rgb}{0.,0,0.}
\definecolor{c30}{rgb}{0.,0.,0}
\definecolor{c40}{rgb}{0,00,00}
\definecolor{c50}{rgb}{0,0,0}
\def\EH#1{\textcolor{c30}{#1}}
\def\EH#1{\textcolor{c30}{#1}}
\def\pE#1{\textcolor{c30}{#1}}
\def\rE#1{\textcolor{c20}{#1}}
\def\rE#1{#1}
\def\EH#1{#1}
\def\qE#1{\textcolor{c30}{#1}}
\def\qE#1{#1}
\def\pE#1{#1}
\def\uE#1{\textcolor{c30}{#1}}
\def\vE#1{\textcolor{c30}{#1}}
\def\aH#1{\textcolor{c50}{#1}}
\def\cH#1{\textcolor{c30}{#1}}
\def\cH#1{#1}
\def\vE#1{#1}
\def\aH#1{\textcolor{c30}{#1}}
\def\aH#1{#1}
\def\bH#1{\textcolor{c30}{#1}}
\def\bH#1{#1}
\def\hH#1{\textcolor{c30}{#1}}
\def\hH#1{#1}
\def\cK#1{\textcolor{c40}{#1}}
\def\aK#1{\textcolor{c40}{#1}}
\def\aK#1{#1}
\def\cK#1{\textcolor{green}{#1}}
\def\cK#1{#1}
\def\uE#1{ #1 }
\def\ftj{\cE{h}}
\def\luj{\left(\frac u{\lambda_j}\right)}
\def\luiu{\left(\frac u{\lambda}\right)}
\def\verDraft{1}
\def\verFinal{2}
\def\ver{2}
\newcommand{\notindraft}[1]{}
\newcommand{\notindraft}[1]{#1}
\newcommand{\notinfinal}[1]{#1}
\newcommand{\notinfinal}[1]{}
\def\ver{1}
\definecolor{c20}{rgb}{0.,0.7,0.}
\definecolor{c30}{rgb}{0.,0.,1.}
\definecolor{c40}{rgb}{1,0.1,0.7}
\definecolor{c50}{rgb}{1,0,0}
\definecolor{c20}{rgb}{0.,0,0.}
\definecolor{c30}{rgb}{0.,0.,0}
\definecolor{c40}{rgb}{0,00,00}
\definecolor{c50}{rgb}{0,0,0}
\def\cD#1{\textcolor{c20}{#1}}
\def\cE#1{\textcolor{c50}{#1}}
\def\cD#1{#1}
\def\cE#1{#1}
\newcommand{\PP}{\mathbb P}
\newcommand{\mean}[1]{\mathbb E\left[ #1\right]}
\newtheorem{theo}{Theorem}[section]
\newtheorem{sat}[theo]{Proposition}
\newtheorem{de}[theo]{Definition}
\newtheorem{lem}[theo]{Lemma}
\newtheorem{exxa}[theo]{Example}
\newtheorem{korr}[theo]{Corollary}
\newtheorem{remark}[theo]{Remark}
\newtheorem{remarks}[theo]{Remarks}
\newcommand{\nelem}[1]{{Lemma \ref{#1}}}
\newcommand{\netheo}[1]{{Theorem \ref{#1}}}
\newcommand{\prooftheo}[1]{ \textsc{Proof of Theorem} \ref{#1} }
\newcommand{\kb}[1]{\boldsymbol{#1}}
\newcommand{\vk}[1]{\kb{#1}}
\newcommand{\ve}{\varepsilon}
\def\fracl#1#2{\biggr(\frac{#1}{#2} \biggl) }
\newcommand{\pk}[1]{\mbox{\rm$\mathbb{P}$} \{#1\} }
\newcommand{\R}{\mathbb{R}}
\newcommand{\inr}{\in \R}
\newcommand{\ldot}{,\ldots,}
\newcommand{\limit}[1]{\lim_{#1 \to   \infty}}
\newcommand{\BQN}{\begin{eqnarray}}
\newcommand{\EQN}{\end{eqnarray}}
\newcommand{\BQNY}{\begin{eqnarray*}}
\newcommand{\EQNY}{\end{eqnarray*}}
\newcommand{\BS}{\begin{sat}}
\newcommand{\ES}{\end{sat}}
\newcommand{\BT}{\begin{theo}}
\newcommand{\ET}{\end{theo}}
\newcommand{\BK}{\begin{korr}}
\newcommand{\EK}{\end{korr}}
\newcommand{\BD}{\begin{de}}
\newcommand{\ED}{\end{de}}
\newcommand{\BIT}{\begin{itemize}}
\newcommand{\EIT}{\end{itemize}}
\newcommand{\BDI}{\begin{description}}
\newcommand{\EDI}{\end{description}}
\newcommand{\BRM}{\begin{remarks}}
\newcommand{\ERM}{\end{remarks}}
\newcommand{\QED}{\hfill $\Box$}
\newcommand{\IF}{\infty}
\newcommand{\BTH}{\begin{theo}}
\newcommand{\ETH}{\end{theo}}
\newcommand{\BPR}{\begin{sat}}
\newcommand{\EPR}{\end{sat}}
\newcommand{\BEX}{\begin{exxa}}
\newcommand{\EEX}{\end{exxa}}
\newcommand{\BC}{\begin{cases}}
\newcommand{\EC}{\end{cases}}
\newcommand{\COM}[1]{}
\newcommand{\BL}{\begin{lem}}
\newcommand{\EL}{\end{lem}}
\def\SI{\Sigma}
\def\U{\vk{U}}
\def\1d{\{1 \ldot d\}}
\def\Zju{\cE{X_j(u)}}
\def\SSU{\cE{S(u)}}%\sum_{i=1}^d \Ziu}
\def\YYu{\vk{X}(u)}
\def\ftj{\cE{h}}
\def\ee{\Upsilon(u)}
\begin{document}

\begin{center}
\thispagestyle{empty}

{\Large \bf Second Order Asymptotics of Aggregated Log-Elliptical Risk}

       \vskip 1.4 cm

         \centerline{\large Dominik Kortschak$^{a}$\footnote{E-mail: kortschakdominik@gmail.com\\ DK was  supported by the
the MIRACCLE-GICC project and the Chaire d'excellence ``Generali -- Actuariat responsable: gestion des risques naturels et changements climatiques.``} and Enkelejd Hashorva$^{b}$\footnote{E-mail: Enkelejd.Hashorva@unil.ch\\ EH kindly acknowledges partial support by  Swiss National Science Foundation Grants 200021-134785 and 200021-1401633/1 and by RARE -318984, a Marie Curie International Research Staff Exchange Scheme Fellowship within the 7th European Community Framework Programme.}}
\vspace{2 cm}
$~^a$
Universit\'e de Lyon, F-69622, Lyon, France; Universit\'e Lyon 1, Laboratoire SAF, EA 2429, Institut de Science Financi\`ere et d'Assurances, 50 Avenue Tony Garnier, F-69007 Lyon, France\\
$~^b$ Department of Actuarial Science, Faculty of Business and Economics, University of Lausanne, UNIL-Dorigny 1015 Lausanne, Switzerland\\

       \vskip 0.4 cm

%  \today{}

\end{center}
%\newpage

%%%%%%%%%%%%%%%%%%%%%%%%%%%%%%%%%%%%%%%%%%%%%%
{\bf Abstract:} In this paper we establish the error rate of first order asymptotic approximation for the tail probability of sums of
log-elliptic\qE{al} risks. Our approach is motivated by extreme value theory which allows us to impose only some \qE{weak}
 asymptotic conditions \rE{satisfied} in particular \qE{by} log-normal risks. Given the wide range of applications of \rE{the} log-normal model in finance and insurance
our result is of interest for both rare-event simulations and numerical calculations.
%Our model is very flexible with \qE{multivarite} dependence structure \qE{parametrized} by the threshold, which is \qE{important} for rare-event %simulations.
 \qE{We present numerical examples which illustrate that the second order approximation derived in this paper significantly
improves over the first order approximation.}\\

{\bf Key words}: Risk aggregation; \rE{second} order asymptotics; log-elliptical distribution; log-normal distribution;
Gumbel max-domain of attraction.

%%%%%%%&%%%&%%%%%%%%%%%%%%%%%%%%&&%%%%%%%%%%%%%%%%%%%%%%%%%%%%%%&%%%%%%%%%%%%%%%%%%%%%%
\section{Introduction}
Modeling \qE{multivariate} dependent risks is an important task of actuaries involved in risk management, pricing and loss reserving. The standard and most common
model used in practice, both in insurance and finance, is that of dependent log-normal risks, see e.g., Mitra and Resnick (2009),
Foss and Richards (2010) or  Asmussen et al.\ (2011).  Despite the tractability of multivariate log-normal distribution, the first result which
derives the asymptotic tail behaviour of \rE{the sum of} log-normal risks \rE{appeared recently} in Asmussen and Rojas-Nandayapa (2008), see
 also Albrecher et al.\ (2006).  The recent contribution Asmussen et al.\ (2011) derives an explicit asymptotic expansion ($u\to \IF$) of
$$\pk{S(u)> u}, \quad \text{ \hH{with  } }\cH{S(u)= X_1(u)+ \cdots + X_d(u)},$$
where  $X_i(u), i\le d$ is a $d$-dimensional log-normal random vector with underlying covariance matrix depending on the \qE{threshold}  $u$.
In the aforementioned paper the consideration of parametrized \rE{risks} \qE{leads} to the introduction of novel importance sampling estimators of $\pk{S(u)> u}$.\\
Given the fact that Normal random vectors are a canonical example of the elliptically symmetric ones, it is natural to model the aggregated risk utilizing a log-elliptical framework,
 which has been recently discussed in Rojas-Nandayapa (2008), Kortschak and Hashorva (2013)  and Hashorva (2013). The latter two papers derived (under different conditions) the following asymptotic expansion
\BQN\label{MT}
\PP\left(\SSU >u\right)%:=\PP\left(\sum_{i=1}^d X_i(u)>u\right)
\sim \sum_{i=1}^d \PP\left(X_i(u)>u\right), \quad u\to \IF,
\EQN
with  $X_i(u), i\le d$ the components of some $d$-dimensional log-elliptical random vector indexed by $u$ \pE{(here} $a(u) \sim b(u)$
stands for the asymptotic equivalence as $u\to \IF$ of two functions $a(\cdot),b(\cdot)$\pE{)}.

The \pE{principal} goal of this contribution is
the precise quantification of the error of the approximation claimed in \eqref{MT}. Specifically for $\ee$ defined as
$$
\ee := \PP\left(\SSU >u\right)- \sum_{i=1}^d \PP\left(X_i(u)>u\right), \quad u\to \IF
$$
we derive in the main result (\netheo{theorem:secondorder} below) the rate of convergence of $\ee$ to 0 as  $u\to \IF$.
As was already observed in Mitra and Resnick (2009) the first order approximation \rE{in (1.1)} for positive\qE{ly} correlated random variables can be rather crude even in the \cE{bivariate case $d=2$}. Now the obvious motivation of our paper is to improve this rather crude first order approximation. The numerical examples in Section \ref{section:numerics} show that the second order approximation significantly improves over the first order \rE{one}.\\

Two essential properties of log-normal risks are \rE{crucial} for the derivation of the tail asymptotic expansion in \eqref{MT}: a) the \rE{univariate} log-normal distribution belongs to the Gumbel max-domain of attraction (see below for definition),
and b) log-normal risks and in particular \qE{Normal} \rE{ones} are asymptotically independent, see e.g., Resnick (1987). \\
The derivation of \eqref{MT} for log-elliptical risks is strongly based on assumptions which agree with a) and b) above. In order to derive the \EH{asymptotics} of $\ee$, we shall impose some additional \rE{restrictions} on
the probability density function of log-elliptical risks. Our result is new even
when $(\log X_1(u) \ldot  \log X_d(u))$ is a $d$-dimensional Normal random vector with mean zero and non-singular covariance matrix $\Sigma$.
% which has all entries in the main diagonal equal 1.
For this case assuming  for simplicity that the off-diagonal elements of $\SI$ are equal to $\rho \in (-1,1)$, we \rE{obtain}
\BQN\label{eee}
\ee &\sim &  \frac{d(d-1) \exp( (1- \rho^2)/2)}{ \sqrt{2 \pi}  u^{1- \rho}} \exp(- (\log u)^2/2), \quad u\to \IF.
\EQN
\vE{The speed of convergence \pE{of} $\ee$ to 0 is shown by \eqref{eee} to decrease with $\rho$ increasing, i.e., the more dependence the worse the approximation. When $\rho=0$, then \eqref{eee} shows that
$$ \ee \sim d(d-1) \exp( 1/2)f_*(u)$$
as $u\to \IF$ with $f_*$ the probability density function of $X_1(u)$. As expected, \eqref{eee} \rE{implies} further that when \rE{the dimension} $d$ increases the quality of approximation \rE{also} decreases.}

Organisation of the rest of the paper:
In \netheo{theorem:secondorder} below we present our main result. Numerical comparisons are given in  Section \ref{section:numerics}.
Several auxiliary results and the proof of the main result are \qE{displayed} in Section \ref{section:proofs}.

\section{Results}\label{sec:main}
Let  $R$ be a positive random variable with distribution function $H$ being independent
of $\U$ which is uniformly distributed on the unit sphere of $\R^d$ (with respect to the $L_2$-norm). For $A_u,u>0$ a sequence of $d\times d$
non-singular matrices we \rE{shall set} $\Sigma_u= A_u A_u^\top$.  In the sequel we suppose that \rE{the elements of $\Sigma_u$ satisfy}
\BQN\label{SIu}
\sigma_{11}(u)=\cdots=\sigma_{dd}(u)=1, \quad \sigma_{ij}(u)\in [-1,1], \quad i\not=j,\quad u>0
\EQN
and \rE{further} $\beta_i, \lambda_i, i\le d$ are positive constants \rE{such that}
\BQN\label{betaC}
 0 <  \beta_d \le \cdots \le \beta_1 < \IF, \quad \lambda_1=\max_{\beta_i=\beta_1} \lambda_i.
 \EQN
For given $\gamma_u,u>0$ positive constants \rE{satisfying}  $\limit{u} \gamma_u=\gamma\in (0,\IF)$ we define a $d$-dimensional random vector
$$\YYu:=(\lambda_1 Z_1(u)^{\beta_1 \cE{\gamma_u} },\cdots, \lambda_d Z_{\cE{d}}(u)^{\beta_d \gamma_u})^\top,$$
where
$$ (Z_1(u),\ldots,Z_d(u))^\top= \exp( R A_u \U), \quad u>0.$$
The class of log-elliptical risks considered in this paper are such that $R$ has distribution function $F$ with infinite upper endpoint \qE{satisfying}
\BQN \label{eq:rdfd:2}
\limit{u}
\frac{1- F(u+xb(u))} {1- F(u)} &=& \exp(-x),\quad \forall x\inr,
\EQN
\qE{with some positive scaling function $b(\cdot)$.}  When \eqref{eq:rdfd:2} holds,  we say that  $R$ \rE{(and alternatively also $F$)} is  in the
\rE{max-domain of attraction (MDA)} of the  Gumbel distribution $\Lambda(x)=\exp(-\exp(-x)), x\inr$
with positive scaling function $b(\cdot)$. If we assume further that
\BQN \label{eq:main:e}
\limit{u} b(u)&=& 0,
\EQN
then for $j\le d$
\def\luj{\left(\frac u{\lambda_j}\right)}
\BQN
\lim_{u\to\infty}\frac{\PP\left(X_j(u)>u+xe^*_j(u)\right)}{\PP\left(X_j(u)>u\right)}=\exp(-x), \quad x\inr,
\EQN
where
 \BQN\label{eq:eyj}
 e_j^*(u)&:=&  \beta_j\gamma_u u e\left( \luj^ {\frac 1 {\beta_j\gamma_u}}\right)\luj^ {-\frac 1 {\beta_j\gamma_u}},
\EQN
with
\BQN\label{eub}
 e(u)&= &u b(\log u), \quad u>0.
\EQN
Hereafter we shall assume that
\BQN\label{euINF}
\lim_{u\to\infty} e(u)&=&\infty.
\EQN
The following theorem presented in Kortschak and Hashorva (2013) establishes the first order asymptotics of aggregated log-elliptical \rE{risk}.
\BT
\label{mytheo3}   Suppose that \eqref{eq:main:e} and \EH{\eqref{euINF} hold} and for  $j$  with $\beta_j=\beta_1$  and every $\epsilon>0$, $c>0$ there exists some  $u_0$
such that for all $u>u_0$
\BQN\label{conditionrho}
\sigma_{\aH{ij}}(u)+c \sqrt{\frac{1-\sigma_{\aH{ij}}(u)^2} {\log(u)}}
&\le & \cH{\frac{\beta_{\aH{j}}}{\beta_i}} \frac {\log(\epsilon e^*_{\cH{i}}(u))}{\log(u)}
\EQN
holds for all $i\not=j$, then \eqref{MT} is satisfied.
\ET

{\bf Remark}:  a)
If $\beta_i=\beta_j$ and $\lim_{u\to\infty} \log(e_j^*(u))/\log(u)=1$, then condition \eqref{conditionrho} is for example fulfilled when
\BQN\label{remark:conditionrho}
\limsup_{ u\to \infty} \frac{-\log\left(\frac{e^*_j(u)}{u}\right) }{(1-\sigma_{ij}(u))\log(u)} <1.
\EQN
\qE{b) If $\sigma_{ij}(u) < \kappa < \lim_{u\to\infty} (e_i^*(u))/u$  for all large $ u$ and $i\not =j$, then condition \eqref{conditionrho} holds}.

\def\wT{\widetilde{R}}
In order to derive the asymptotics of the error term $\Upsilon(u)$ we shall impose \qE{some} additional  restrictions.
Both conditions \eqref{eq:rdfd:2} and \eqref{eq:main:e} imply that the  distribution function
$\widetilde F$ of $\widetilde R:= \exp(R)$ is in Gumbel MDA with scaling function \EH{$e(\cdot)$ defined in \eqref{eub}.}  Below, that assumption will be strengthened to $\widetilde{F}$ is eventually differentiable with continuous \rE{probability density function} $\widetilde{f}$ such that
the following von Mises condition
\BQN\label{eq:fe}
\lim_{u\to\infty}\frac{\widetilde{f}(u+x e(u))}{\widetilde{f}(u)}=\exp(-x) \quad
\EQN
holds for all $x\inr$. We formulate next our \rE{principal} result.% under further restrictions on the scaling function $e(u)$.

\BT \label{theorem:secondorder}
Under the assumptions of Theorem \ref{mytheo3}, suppose further that the scaling function $e(\cdot)$ is \vE{ultimately} monotone increasing and satisfies
\BQN\label{coEU}
\lim_{\lambda\to1} \limsup_{u\to\infty} \frac{e(\lambda u)} {e(u)}&=&1.
\EQN
If \eqref{eq:fe} holds, and  further %for  \cE{some $c_{\cD{i}} \in [0,\infty)$ }
\BQN\label{condc}
\lim_{u\to\infty} \frac{\log(u)e^*_i(u)}{u} =c_i\in [0,\IF), \quad 1 \le i \le d,
\EQN
then we have
\BQN\label{secondorder}
\Upsilon(u)&
\sim &\sum_{j=1}^d \left(\sum_{i\not=j}\frac{\lambda_i}{\cD{\beta_j\gamma}}
\exp\left( \frac{ c_{\cD{j}}\left( 1-\sigma_{ij}^2( \cE{u})\right)} {  2 \cE{ (\beta_j/\beta_i)}^{2 } }\right) \left(\frac{u}{\lambda_j}\right)^{\frac{\beta_i \sigma_{ij}(u)}{ \beta_j}} \right)
\frac{1}{e^*_j(u)}
\PP\left( \Zju>u\right), \quad  u\to \IF.
\EQN
\ET

\begin{remark} a)
For any $i\not=j, i,j\le d$ after some long calculations %some tedious calculations that we leave to the reader show that
\begin{align*}
\mean {\lambda_i Z_i(u)^{\beta_i \gamma(u)} \bigl \lvert \lambda_j Z_j(u)^{\beta_j \gamma(u)} =u}
\sim
\frac{\lambda_i}{\cD{\beta_j\gamma}}
\exp\left( \frac{ c_{\cD{j}}\left( 1-\sigma_{ij}^2( \cE{u})\right)} {  2 \cE{ (\beta_j/\beta_i)}^{2 } }\right)\left(\frac{u}{\lambda_j}\right)^{\frac{\beta_i \sigma_{ij}(u)}{ \beta_j}}, \quad u\to \IF
% &=\frac{1}{\sqrt{1-\sigma_{ij}(u)^2}} \left(\frac u {\lambda_j}\right)^{\frac{\beta_i \sigma_{ij}(u)}{\beta_j}}\left(1+ \int_{0}^\infty \left(e^{\beta_i\gamma(u)\sqrt{1-\rho(u)^2}s}-e^{-\beta_i\gamma(u)\sqrt{1-\rho(u)^2}s} \right) \frac{\int_{\sqrt{\rho^2+u^2}}^\infty (r^2-u^2)^{-\frac 12} d F_2(r)} {2 \int_{u}^\infty (r^2-u^2)^{-\frac 12} d F_2(r)} ds\right)\\
%&\text{ Still to be calculated or found by a reference}
% &=\frac{1}{\sqrt{1-\sigma_{ij}(u)^2}} \left(\frac u {\lambda_j}\right)^{\frac{\beta \sigma_{ij}(u)}{\beta_j}}\Bigg(1+\\&\quad  \int_{0}^\infty \left(e^{\beta_i\gamma(u)\sqrt{1-\rho(u)^2}\rho}-e^{-\beta_i\gamma(u)\sqrt{1-\rho(u)^2}\rho} \right) \frac{\int_{\sqrt{\rho^2+u^2}}^\infty (r^2-u^2)^{-\frac 12} \int_r^\infty rs^{n-2}(s^2-r^2)^{\frac n2-2} d F(s) dr} {2 \int_{u}^\infty (r^2-u^2)^{-\frac 12}  \int_r^\infty rs^{n-2}(s^2-r^2)^{\frac n2-2} d F(s) dr} d\rho\Bigg)
% &\sim \frac{\Gamma\left(\frac{d}2 \right)}{\pi \Gamma\left(\frac{d-2}2 \right)}\int_{0}^1 \int_{-1}^1 \lambda_i \left(\frac{u}{\lambda_i}\right) ^{\frac{\beta_i \gamma(u) (\theta\sigma_{ij}(u) +\sqrt{1-\theta^2 }\sqrt{1-\sigma_{ij}(u)^2 } x)}{\theta \beta_j \gamma_u } }  (1-x^2)^{\frac{d-4}2}  (1-\theta^2)^{\frac{d-3}2}dx d\theta\\
% &=\frac{\Gamma\left(\frac{d}2 \right)}{\pi \Gamma\left(\frac{d-2}2 \right)} \left(\frac{u}{\lambda_i}\right) ^{\frac{\beta_i \sigma_{ij}(u) }{\beta_j  } }
% \int_{0}^1 \int_{-1}^1 \lambda_i \left(\frac{u}{\lambda_i}\right) ^{\frac{\beta_i \sqrt{1-\theta^2 }\sqrt{1-\sigma_{ij}(u)^2 } x}{\theta \beta_j  } }  (1-x^2)^{\frac{d-4}2}  (1-\theta^2)^{\frac{d-3}2}dx d\theta
\end{align*}
%as $u \to \IF$,
and thus \eqref{secondorder} is in accordance with the findings of Kortschak (2011).
\COM{
b) In view of Davis-Resnick tail property (Proposition 1.1 in Davis and
Resnick (1988), see also Embrechts et al.\ (1997) p.\ 586)) % or Lemma ?? in ???)
if \eqref{eq:rdfd:2} holds and the scaling function $e(\cdot)$ satisfies
\eqref{coEU} holds, then
\BQN\label{Davis}
\limit{ u}  \fracl{u }{e(u)}^\mu  \frac{\overline{F}( \kappa u)}{\overline{F}( u)}&=& 0
\EQN
is satisfied for any $\kappa>1$ and $\mu \inr$.  Consequently, $\Upsilon(u)$ in \eqref{secondorder} tends to 0 as $u \to \IF$.
Note in passing that
\BQN\label{eq:w}
\limit{u} \frac{ e(u)}{u}&=& 0,
\EQN
hence, for $\mu \le 0$ \eqref{Davis} holds since $1- H$ is a rapidly varying function at infinity (see Resnick (1987)).
%%Note in passing that when \eqref{GG} holds, then, as $u\to \IF$
}

\vE{b) The von Mises condition \eqref{eq:fe} is satisfied by a large class of distribution functions in the Gumbel MDA. In particular,
the log-normal distribution satisfies it. Indeed, we have that with $\widetilde f(x)= \frac{1}{ x \sqrt{2 \pi}} \exp(- (\log x)^2/2), x>0$
the scaling function can be taken to be  $e(u)= u/\log u, u>0$, and thus
$$\frac{\widetilde f(u+ x e(u))}{f(u)} \sim  \exp(- (\log (u+ x u/\log u))^2/2 + (\log u)^2/2) \sim \exp(-x), \quad \forall x\inr$$
as $u\to \IF$.}
\end{remark}

{\bf Example}. Given the huge interest on multivariate log-normal models for aggregated \rE{risk} (see e.g., Asmussen et al.\ (2011))
we discuss briefly the findings of our main result when $\log X_i(u), i\le d, u>0$ in \netheo{theorem:secondorder} are Normal random variables. Since the random radius $R$ pertaining to the stochastic representation of the Gaussian distribution and the distribution function of each $\log X_i(u)$ \rE{are} in the Gumbel MDA with the same scaling function $b(u)=1/u$, we have that $e(u)= u/\log u$, hence
$$ \lim_{\lambda  \to 1}\limsup_{u\to \IF} \frac{ e(\lambda u)}{e(u)}=
\lim_{\lambda  \to 1}\limsup_{u\to \IF} \frac{  \lambda u /(\log u+ \log \lambda)}{u/\log u} =\lim_{\lambda  \to 1} \lambda=1.$$
Consequently, condition \eqref{coEU} is satisfied, and further \eqref{eq:fe} follows easily.
Next, for any $j\le d$ %we have for all $u>0$
 \BQNY
 e_j^*(u)&=&  \beta_j\gamma_u u e\left( \luj^ {\frac 1 {\beta_j\gamma_u}}\right)\luj^ {-\frac 1 {\beta_j\gamma_u}} = \frac{\beta_j\gamma_u u}{\log \left( \luj^ {\frac 1 {\beta_j\gamma_u}}\right)}=
 \frac{\beta_j^2\gamma_u^2 u}{\log u - \log \lambda_j} \sim   \frac{\beta_j^2\gamma^2 u}{\log u }
\EQNY
\rE{as $u\to \IF$}. Further, since
\BQNY
c_i=\lim_{u\to\infty} \frac{\log(u)e^*_i(u)}{u} = \gamma^2\beta_i^2, \quad 1 \le i \le d
\EQNY
condition \eqref{condc} holds. Therefore, \eqref{secondorder} boils down to
\BQNY\Upsilon(u)
&
\sim &\frac{\log u }{ \gamma^3 u}  \sum_{j=1}^d \sum_{i\not=j}\frac{\lambda_i}{\cD{\beta_j^3}}
\exp\left( \frac{ c_{\cD{j}}\left( 1-\sigma_{ij}^2( \cE{u})\right)} {  2 \cE{ (\beta_j/\beta_i)}^{2 } }\right) \left(\frac{u}{\lambda_j}\right)^{\frac{\beta_i \sigma_{ij}(u)}{ \beta_j}} \PP\left( X_j(u)>u\right)\\
&
\sim &\frac{\log u }{ \gamma^3 u} \sum_{j=1}^d \sum_{i\not=j}\frac{\lambda_i}{\cD{\beta_j^3}}
\exp\left( \frac{ c_{\cD{j}}\left( 1-\sigma_{ij}^2( \cE{u})\right)} {  2 \cE{ (\beta_j/\beta_i)}^{2 } }\right) \left(\frac{u}{\lambda_j}\right)^{\frac{\beta_i \sigma_{ij}(u)}{ \beta_j}}  \frac{\exp(- (\log (u/\lambda_j))^2/(2\beta_j ^2\gamma_u^2))}{\sqrt{ 2  \pi \beta_j ^2\gamma_u^2} \log u}\\
&
\sim &  \sum_{j=1}^d \sum_{i\not=j}\frac{\lambda_i}{\cD{(\beta_j \gamma)^4}}
\exp\left(  (\beta_i \gamma)^2( 1-\sigma_{ij}^2( \cE{u}))/2\right) \left(\frac{u}{\lambda_j}\right)^{\frac{\beta_i \sigma_{ij}(u)}{ \beta_j}}  \frac{\exp(- (\log (u/\lambda_j))^2/(2 (\beta_j \gamma_u)^2))}{ u\sqrt{ 2  \pi} }, \quad  u\to \IF.
\EQNY
% In the special case of equi-correlated distribution, i.e., $\sigma_{ij}(u)= \rho(u) \to \rho <1, i,j\le d, i\not=j$ we have further
% \BQNY
% \Upsilon(u) &
% \sim & \frac{\exp(- (\log u)^2/2)}{\sqrt{ 2  \pi} \gamma^3 u^{1- \rho(u)} } \sum_{j=1}^d \sum_{i\not=j}\frac{\lambda_i}{\cD{\beta_j^3}}
% \exp\left( \gamma^2 \beta_i^2( 1-\rho^2)/2\right) \left(\frac{1}{\lambda_j}\right)^{\frac{\beta_i \rho}{ \beta_j}}, \quad  u\to \IF.
% \EQNY

\COM{

 The scaling function $e(\cdot)$ is self-neglecting (see Bingham et al.\ (1987) for the main properties), i.e.,
\BQNY
\frac{e(u+ x e(u))}{e(u)} \to 1, \quad u\to \IF
\EQNY
locally uniformly for $x\inr$, and
\BQN\label{eq:w}
\limit{u} \frac{ e(u)}{u}&=& 0.
\EQN
Moreover, \eqref{eq:rdfd:2} implies that for all large $u$ (e.g.,  Resn\cD{ic}k (1987))%
\BQN\label{rep}
1- H(u)& =& c(u) \exp \Bigl( - \int_{x_0}^u \frac{g(t)}{e(t)}\, dt \Bigr), \quad \forall u> x_0,
\EQN
with $c,g$ two positive measurable functions such that $\limit{u}c(u)=\limit{u}g(u)=1$ and $x_0$ some constant.
}

\section{Numerical Examples}\label{section:numerics}

In this section we \qE{shall present} some numerical examples. For comparison purposes we use the same examples as in Mitra and Resnick (2009),
which means  that  $(Y_1,Y_2)$ is a bivariate Normal random vector with zero mean, each component has variance $1$ and correlation coefficient is $\vE{\rho(u)}=:\rho \in \{-0.9,0,0.5,0.9\}$, \qE{and we set $X_i= \exp(Y_i), i=1,2.$} For this choice $e_j^*(u)=e(u)$ for any $u>0, j=1,2$.
For the practical implementation of the second order \EH{asymptotics} we replace the term $\frac{\PP\left( \Zju>u\right)}{e^*_j(u)}$ in equation \eqref{secondorder} by the probability density function of $\Zju$. To check the accuracy of the asymptotic approximations we use Monte Carlo simulation with the estimator from Kortschak and Hashorva (2013). \rE{In tables} \ref{table:09}--\ref{table:m09} we \rE{present} the results of the numerical study. In the first column of the tables we provide the threshold $u$. In the column ''Asympt $1$`` respectively ''Asympt $2$`` we provide the first respectively second order asymptotic approximation for the ruin probability. The results of the Monte Carlo simulation is given in the column ''MC``. For the Monte Carlo simulation we used so many simulations that the error of the Monte Carlo simulation is negligible compared to the error of the asymptotic approximations. The column ''Ratio $1$`` respectively ''Ratio $2$`` provide the ratio of the first order respectively second order asymptotic approximation and the result of the Monte Carlo simulation. The last three columns of the table provide three heuristic measures for the quality of the asymptotic approximations $\epsilon, u^\epsilon$ and $\hat \rho$. \rE{For} the measure $\epsilon$ \rE{which is
motivated by}   condition \eqref{conditionrho} we calculate for $\theta=\log(u)/\log(u+e(u))$ the corresponding $\epsilon$ for which
\BQN
\vE{\rho(u)}+c \sqrt{1-\vE{\rho(u)}^2} \sqrt{1/\theta^2 -1}
= \frac{\beta_{\aH{j}}}{\beta_i} \frac {\log(\epsilon e^*_{\cH{i}}(u))}{\log(u)}.                                                                                                                                                                                                                                                               \EQN
The  measure $\rho$ is just the $\hat \rho$ for which  the inequality in \eqref{remark:conditionrho} is fulfilled with equality.\\
For the case $\rho =0.9$ we can observe that the second order approximation improves significantly over the first order \rE{one} but is still not applicable. \rE{Indeed, in order to} get a  $\hat \rho=0.9$ we have \rE{to} choose  $u\approx 3.4\times 10^{15}$ and similarly if we choose $\rho=\vE{\rho(u)}=0.9$ and we want to get an  $\epsilon\approx 0.1$ we need $u\approx10^{30}$ which means that there should not be any hope that the \EH{asymptotics} gives meaningful results for $\rho=\vE{\rho(u)}=0.9$ and $u$ in a normal range (which was also found
by Mitra and Resnick (2009)). Also for the other values of $\rho$
 we see from the tables that the  second order \EH{asymptotics} improves significantly over the first order estimate. Further the asymptotics work better for smaller values of $\rho$ especially for $\rho=-0.9$ the first as well as the second order \rE{asymptotics} perform quite well. \rE{Finally we remark that the value of $e^\epsilon$ displayed before $\hat \rho$ is comparable with} the value in the column in ''Ratio $1$'' (for sufficiently large values of $u$).
  %So it seems that the measure $\epsilon$ has some relevance in predicting the quality of the first order approximation and should be  further %studied in a different paper.
\begin{table}
\centering
\begin{tabular}{|c||c|c|c||c|c||c|c|c|}
 \hline
$u$ & Asympt $1$ & Asympt $2$ & MC & Ratio $1$ & Ratio $2$ &$\epsilon$&$e^\epsilon$&$\hat\rho$\\\hline\hline
$10$ & $0.0213$ & $0.0705$ & $0.0522$ & $2.45$ & $0.74$ & $2$ & $7.38$ & $0.638$\\\hline
$30$ & $0.000671$ & $0.00259$ & $0.00297$ & $4.43$ & $1.15$ & $2.89$ & $18$ & $0.64$\\\hline
$50$ & $9.15e-05$ & $0.000373$ & $0.000529$ & $5.77$ & $1.42$ & $3.25$ & $25.8$ & $0.651$\\\hline
$75$ & $1.58e-05$ & $6.68e-05$ & $0.000112$ & $7.08$ & $1.67$ & $3.51$ & $33.5$ & $0.661$\\\hline
$100$ & $4.12e-06$ & $1.79e-05$ & $3.36e-05$ & $8.16$ & $1.88$ & $3.68$ & $39.7$ & $0.668$\\\hline
$200$ & $1.17e-07$ & $5.31e-07$ & $1.32e-06$ & $11.3$ & $2.49$ & $4.04$ & $57.1$ & $0.685$\\\hline
$300$ & $1.17e-08$ & $5.45e-08$ & $1.59e-07$ & $13.6$ & $2.91$ & $4.23$ & $68.7$ & $0.695$\\\hline
$500$ & $5.15e-10$ & $2.45e-09$ & $8.68e-09$ & $16.9$ & $3.54$ & $4.43$ & $84.2$ & $0.706$\\\hline
$700$ & $5.71e-11$ & $2.76e-10$ & $1.11e-09$ & $19.4$ & $4.02$ & $4.55$ & $94.7$ & $0.713$\\\hline
$1000$ & $4.92e-12$ & $2.4e-11$ & $1.1e-10$ & $22.3$ & $4.56$ & $4.66$ & $106$ & $0.72$\\\hline
$1500$ & $2.61e-13$ & $1.29e-12$ & $6.78e-12$ & $26$ & $5.27$ & $4.77$ & $118$ & $0.728$\\\hline
$2000$ & $2.94e-14$ & $1.46e-13$ & $8.52e-13$ & $29$ & $5.83$ & $4.84$ & $127$ & $0.733$\\\hline
$2500$ & $5.12e-15$ & $2.56e-14$ & $1.6e-13$ & $31.3$ & $6.26$ & $4.89$ & $133$ & $0.737$\\\hline
$3000$ & $1.18e-15$ & $5.92e-15$ & $3.95e-14$ & $33.4$ & $6.66$ & $4.93$ & $138$ & $0.74$\\\hline
$5000$ & $1.63e-17$ & $8.26e-17$ & $6.47e-16$ & $39.6$ & $7.84$ & $5.02$ & $151$ & $0.748$\\\hline
$7000$ & $8.47e-19$ & $4.29e-18$ & $3.67e-17$ & $43.3$ & $8.55$ & $5.06$ & $158$ & $0.754$\\\hline
$10000$ & $3.25e-20$ & $1.65e-19$ & $1.58e-18$ & $48.7$ & $9.59$ & $5.1$ & $165$ & $0.759$\\\hline
$1e+05$ & $1.14e-30$ & $5.72e-30$ & $9.21e-29$ & $81.1$ & $16.1$ & $5.17$ & $176$ & $0.788$\\\hline
$1e+06$ & $2.05e-43$ & $9.94e-43$ & $2.16e-41$ & $105$ & $21.7$ & $5$ & $148$ & $0.81$\\\hline
\end{tabular}

\caption{ Results of approximation for  $\rho =0.9$}\label{table:09}
\end{table}
\notindraft{
\begin{table}
\centering
\begin{tabular}{|c||c|c|c||c|c||c|c|c|}
 \hline
$u$ & Asympt $1$ & Asympt $2$ & MC & Ratio $1$ & Ratio $2$ &$\epsilon$&$e^\epsilon$&$\hat\rho$\\\hline\hline
$10$ & $0.0213$ & $0.0472$ & $0.0444$ & $2.09$ & $0.941$ & $1.3$ & $3.66$ & $0.638$\\\hline
$30$ & $0.000671$ & $0.00132$ & $0.00165$ & $2.46$ & $1.25$ & $1.2$ & $3.33$ & $0.64$\\\hline
$50$ & $9.15e-05$ & $0.00017$ & $0.000226$ & $2.47$ & $1.33$ & $1.1$ & $3$ & $0.651$\\\hline
$75$ & $1.58e-05$ & $2.78e-05$ & $3.79e-05$ & $2.4$ & $1.36$ & $1.01$ & $2.74$ & $0.661$\\\hline
$100$ & $4.12e-06$ & $7e-06$ & $9.54e-06$ & $2.31$ & $1.36$ & $0.939$ & $2.56$ & $0.668$\\\hline
$200$ & $1.17e-07$ & $1.83e-07$ & $2.41e-07$ & $2.06$ & $1.32$ & $0.779$ & $2.18$ & $0.685$\\\hline
$300$ & $1.17e-08$ & $1.75e-08$ & $2.23e-08$ & $1.9$ & $1.27$ & $0.691$ & $2$ & $0.695$\\\hline
$500$ & $5.15e-10$ & $7.28e-10$ & $8.86e-10$ & $1.72$ & $1.22$ & $0.589$ & $1.8$ & $0.706$\\\hline
$700$ & $5.71e-11$ & $7.82e-11$ & $9.21e-11$ & $1.61$ & $1.18$ & $0.528$ & $1.7$ & $0.713$\\\hline
$1000$ & $4.92e-12$ & $6.52e-12$ & $7.48e-12$ & $1.52$ & $1.15$ & $0.468$ & $1.6$ & $0.72$\\\hline
$1500$ & $2.61e-13$ & $3.34e-13$ & $3.68e-13$ & $1.41$ & $1.1$ & $0.407$ & $1.5$ & $0.728$\\\hline
$2000$ & $2.94e-14$ & $3.68e-14$ & $4e-14$ & $1.36$ & $1.09$ & $0.368$ & $1.44$ & $0.733$\\\hline
$2500$ & $5.12e-15$ & $6.3e-15$ & $6.74e-15$ & $1.32$ & $1.07$ & $0.339$ & $1.4$ & $0.737$\\\hline
$3000$ & $1.18e-15$ & $1.44e-15$ & $1.52e-15$ & $1.29$ & $1.06$ & $0.318$ & $1.37$ & $0.74$\\\hline
$5000$ & $1.63e-17$ & $1.92e-17$ & $2e-17$ & $1.22$ & $1.04$ & $0.263$ & $1.3$ & $0.748$\\\hline
\end{tabular}

\caption{ Results of approximation for $\rho =0.5$}\label{table:05}
\end{table}

\begin{table}
\centering
\begin{tabular}{|c||c|c|c||c|c||c|c|c|}
 \hline
$u$ & Asympt $1$ & Asympt $2$ & MC & Ratio $1$ & Ratio $2$ &$\epsilon$&$e^\epsilon$&$\hat\rho$\\\hline\hline
$10$ & $0.0213$ & $0.0306$ & $0.0337$ & $1.58$ & $1.1$ & $0.537$ & $1.71$ & $0.638$\\\hline
$30$ & $0.000671$ & $0.000806$ & $0.000864$ & $1.29$ & $1.07$ & $0.28$ & $1.32$ & $0.64$\\\hline
$50$ & $9.15e-05$ & $0.000104$ & $0.000108$ & $1.18$ & $1.04$ & $0.196$ & $1.22$ & $0.651$\\\hline
$75$ & $1.58e-05$ & $1.74e-05$ & $1.77e-05$ & $1.12$ & $1.02$ & $0.145$ & $1.16$ & $0.661$\\\hline
$100$ & $4.12e-06$ & $4.45e-06$ & $4.5e-06$ & $1.09$ & $1.01$ & $0.117$ & $1.12$ & $0.668$\\\hline
$200$ & $1.17e-07$ & $1.22e-07$ & $1.23e-07$ & $1.05$ & $1$ & $0.0679$ & $1.07$ & $0.685$\\\hline
$300$ & $1.17e-08$ & $1.21e-08$ & $1.21e-08$ & $1.03$ & $1$ & $0.049$ & $1.05$ & $0.695$\\\hline
$500$ & $5.15e-10$ & $5.25e-10$ & $5.26e-10$ & $1.02$ & $1$ & $0.0322$ & $1.03$ & $0.706$\\\hline
$700$ & $5.71e-11$ & $5.8e-11$ & $5.8e-11$ & $1.02$ & $1$ & $0.0243$ & $1.02$ & $0.713$\\\hline
$1000$ & $4.92e-12$ & $4.98e-12$ & $4.98e-12$ & $1.01$ & $1$ & $0.018$ & $1.02$ & $0.72$\\\hline
\end{tabular}

\caption{ Results of approximation for  $\rho =0$}\label{table:00}
\end{table}
\begin{table}
\centering
\begin{tabular}{|c||c|c|c||c|c||c|c|c|}
 \hline
$u$ & Asympt $1$ & Asympt $2$ & MC & Ratio $1$ & Ratio $2$ &$\epsilon$&$e^\epsilon$&$\hat\rho$\\\hline\hline
$2$ & $0.488$ & $0.673$ & $0.785$ & $1.61$ & $1.17$ & $0.331$ & $1.39$ & $1.53$\\\hline
$3$ & $0.272$ & $0.331$ & $0.369$ & $1.36$ & $1.11$ & $0.263$ & $1.3$ & $0.914$\\\hline
$5$ & $0.108$ & $0.119$ & $0.121$ & $1.12$ & $1.02$ & $0.148$ & $1.16$ & $0.704$\\\hline
$10$ & $0.0213$ & $0.0221$ & $0.0221$ & $1.04$ & $1$ & $0.0555$ & $1.06$ & $0.638$\\\hline
$15$ & $0.00677$ & $0.0069$ & $0.0069$ & $1.02$ & $1$ & $0.0297$ & $1.03$ & $0.632$\\\hline
$30$ & $0.000671$ & $0.000675$ & $0.000675$ & $1.01$ & $1$ & $0.00976$ & $1.01$ & $0.64$\\\hline
$50$ & $9.15e-05$ & $9.18e-05$ & $9.18e-05$ & $1$ & $1$ & $0.00419$ & $1$ & $0.651$\\\hline
$75$ & $1.58e-05$ & $1.58e-05$ & $1.58e-05$ & $1$ & $1$ & $0.00212$ & $1$ & $0.661$\\\hline
$100$ & $4.12e-06$ & $4.12e-06$ & $4.12e-06$ & $1$ & $1$ & $0.0013$ & $1$ & $0.668$\\\hline
\end{tabular}

\caption{Results of approximation for  $\rho =-0.9$}\label{table:m09} %(For small values of $u$ we used the CMC method since there my implementation of the estimator is not correct)}
\end{table}

% \begin{table}
% \centering
% \begin{tabular}{|c||c|c|c||c|c||c|c|}
%  \hline
% $u$ & Asympt $1$ & Asympt $2$ & MC & Ratio $1$ & Ratio $2$ & HW $1$ & HW $2$\\\hline\hline
% 10 & $0.021$ & $0.07$ & $0.051$ & $2.4$ & $0.73$ & $0.048$ & $0.048$\\\hline
% 30 & $0.00067$ & $0.0026$ & $0.0029$ & $4.4$ & $1.1$ & $0.15$ & $0.15$\\\hline
% 50 & $9.2e-05$ & $0.00037$ & $0.00052$ & $5.7$ & $1.4$ & $0.25$ & $0.25$\\\hline
% 75 & $1.6e-05$ & $6.7e-05$ & $0.00011$ & $7.1$ & $1.7$ & $0.36$ & $0.36$\\\hline
% 100 & $4.1e-06$ & $1.8e-05$ & $3.3e-05$ & $8$ & $1.8$ & $0.45$ & $0.45$\\\hline
% 200 & $1.2e-07$ & $5.3e-07$ & $1.3e-06$ & $11$ & $2.4$ & $0.79$ & $0.79$\\\hline
% 300 & $1.2e-08$ & $5.4e-08$ & $1.7e-07$ & $15$ & $3.1$ & $1.2$ & $1.2$\\\hline
% 500 & $5.1e-10$ & $2.4e-09$ & $8.7e-09$ & $17$ & $3.6$ & $1.6$ & $1.6$\\\hline
% 700 & $5.7e-11$ & $2.8e-10$ & $1.2e-09$ & $21$ & $4.3$ & $2.1$ & $2.1$\\\hline
% 1000 & $4.9e-12$ & $2.4e-11$ & $Inf$ & $Inf$ & $Inf$ & $NaN$ & $NaN$\\\hline
% \end{tabular}
%
% \caption{ Results of approximation for $\rho =0.9$ other estimator}
% \end{table}
}
\section{Further  Results and Proof of \netheo{theorem:secondorder}} \label{section:proofs}
We present first four lemmas which are of some independent interest and then proceed with the proof of our main result.
In the sequel we consider some positive random variable  $R$  such that its distribution function  $F$
has an infinite upper endpoint. Under the assumption that $F$ is in Gumbel MDA with some positive scaling function $b(\cdot)$, we have the following representation (see e.g., Resn\cD{ic}k (1987))
\BQN\label{rep}
1- \qE{F}(u)& =& c(u) \exp \Bigl( - \int_{x_0}^u \frac{g(t)}{b(t)}\, dt \Bigr),
\EQN
with $x_0$ some constant and $c(\cdot),g(\cdot)$ two positive measurable functions such that $\limit{u}c(u)=\limit{u}g(u)=1$.

\pE{Below we} assume that $e(u)= u b(\log(u))$ is a scaling function of $\widetilde F$, i.e.,
\rE{the df} $\widetilde F$ is in the Gumbel MDA with scaling function $e(\cdot)$
(recall $\widetilde F$ is the distribution function of $\exp(R)$). This holds in particular when $\limit{u} b(u)\cH{=0}$.\\
Next,   define $e^*(u)$ by \eqref{eq:eyj} for some $\lambda, \beta$ positive, \hH{i.e.,}
\hH{$$
 e^*(u)= \beta \gamma_u u e\left( \luiu^ {\frac 1 {\beta\gamma_u}}\right)\luiu^ {-\frac 1 {\beta\gamma_u}},
 $$
with $\gamma_u$ such that $\limit{u} \gamma_u= \gamma\in (0,\IF)$.
}

The next lemma is shown in Kortschak and Hashorva (2013), whereas \nelem{remarkunivariateasymptotic} follows by  Berman (1992), see also
Hashorva (2012).  Let next $\vk{v}$ be a given vector in $\R^d,d\ge2$ with $L_2-$norm equal 1, and define $\vk{\theta}= A_u \vk{v}$.

\begin{lem}\label{cor:last} Under Assumption \eqref{conditionrho}, for every $j$ with $\beta_j=\beta_1$ and every
\aK{$\epsilon>0$} there exist \uE{some $c,u_0$} positive such that
  $$
 \theta_i\le \theta_j \frac{\beta_j} {\beta_i} \frac{\log(\epsilon e^*_j(u))}{\log(u)}
 $$
 holds for all $u>u_0$, provided that $\theta_j>1-c/\log(u)$.
\end{lem}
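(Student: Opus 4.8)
The plan is to recognize that the claimed inequality is exactly the content of condition \eqref{conditionrho} once one controls the components of $\vk{\theta}= A_u\vk{v}$ by the entries of $\Sigma_u = A_u A_u^\top$. First I would recall that $\Sigma_u$ has unit diagonal by \eqref{SIu}, so for each index $i$ the $i$-th row $\a_{u,i}$ of $A_u$ has Euclidean norm one, and $\theta_i = \langle \a_{u,i}, \vk{v}\rangle$. Thus $\lvert \theta_i\rvert \le 1$ for every $i$, and more precisely, since $\sigma_{ij}(u)=\langle \a_{u,i},\a_{u,j}\rangle$, decomposing $\vk{v}$ along $\a_{u,j}$ and its orthogonal complement gives
\[
\theta_i = \sigma_{ij}(u)\,\theta_j + \sqrt{1-\sigma_{ij}(u)^2}\,\sqrt{1-\theta_j^2}\,\cos\varphi_{ij}
\]
for a suitable angle $\varphi_{ij}$, hence the bound $\theta_i \le \sigma_{ij}(u)\,\theta_j + \sqrt{1-\sigma_{ij}(u)^2}\,\sqrt{1-\theta_j^2}$. (The case $\theta_j \le 0$ is trivial since then the right-hand side $\theta_j\frac{\beta_j}{\beta_i}\frac{\log(\epsilon e^*_j(u))}{\log u}$ is eventually nonnegative, using \eqref{euINF}, while $\theta_i\le 1$ forces a separate but easy argument; so we focus on $\theta_j$ close to $1$.)

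Next I would plug in the hypothesis $\theta_j > 1 - c/\log u$. This gives $1-\theta_j < c/\log u$, so $\sqrt{1-\theta_j^2} = \sqrt{(1-\theta_j)(1+\theta_j)} < \sqrt{2c/\log u}$, and also $\sigma_{ij}(u)\theta_j \le \sigma_{ij}(u) + \lvert\sigma_{ij}(u)\rvert(1-\theta_j) \le \sigma_{ij}(u) + c/\log u$. Combining,
\[
\theta_i \le \sigma_{ij}(u) + \frac{c}{\log u} + \sqrt{\frac{2c}{\log u}}\,\sqrt{1-\sigma_{ij}(u)^2}
\le \sigma_{ij}(u) + c'\sqrt{\frac{1-\sigma_{ij}(u)^2}{\log u}}
\]
for a suitably adjusted constant $c'$ (absorbing the lower-order $c/\log u$ term, which is legitimate since we get to choose the constant). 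Here one uses that $\sqrt{1-\sigma_{ij}(u)^2}$ need not be bounded below; if it degenerates the term $c/\log u$ must instead be compared directly, but in that regime $\sigma_{ij}(u)\to\pm1$ and the inequality can be checked by hand against the right-hand side. Now apply Theorem \ref{mytheo3}'s hypothesis \eqref{conditionrho}, valid for every $\epsilon>0$ and every constant — in particular for our $c'$ — which yields, for all $u$ larger than the corresponding $u_0$,
\[
\sigma_{ij}(u) + c'\sqrt{\frac{1-\sigma_{ij}(u)^2}{\log u}} \le \frac{\beta_j}{\beta_i}\,\frac{\log(\epsilon e^*_i(u))}{\log u}.
\]

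Finally I would reconcile the indices: the statement of the lemma has $\log(\epsilon e^*_j(u))$ whereas \eqref{conditionrho} produces $\log(\epsilon e^*_i(u))$; since $\beta_j = \beta_1 \ge \beta_i$ and the $e^*_i$ differ from $e^*_j$ only through the factor $\beta_i\gamma_u$, the ratios $\log(e^*_i(u))/\log u$ and $\log(e^*_j(u))/\log u$ have the same limit — or one simply re-runs \eqref{conditionrho} with $\epsilon$ replaced by a slightly smaller $\epsilon'$ to absorb the bounded discrepancy, which is harmless because the quantifier is over all $\epsilon>0$. Also one should multiply through by $\theta_j$: since $\theta_j>0$ and $\theta_j\le 1$, and the right side is eventually positive, $\theta_i \le (\text{RHS}) \le \theta_j^{-1}(\text{RHS})$, so in fact the stronger-looking bound $\theta_i \le \theta_j\frac{\beta_j}{\beta_i}\frac{\log(\epsilon e^*_j(u))}{\log u}$ needs $\theta_j$ on the correct side — here one notes $\theta_j\frac{\beta_j}{\beta_i}\frac{\log(\epsilon e^*_j(u))}{\log u} \ge$ the bound we derived provided $\theta_j$ is close enough to $1$ and $\log(\epsilon e^*_j(u))/\log u$ stays bounded away from $0$ (true by \eqref{euINF}), after shrinking $c$ if necessary. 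The main obstacle is precisely this bookkeeping near $\theta_j = 1$: keeping track of how the $o(1)$ error $c/\log u$ interacts with a possibly vanishing $\sqrt{1-\sigma_{ij}(u)^2}$ and with the factor $\theta_j$ that appears on the right-hand side but not in \eqref{conditionrho}, and choosing the constants $c, u_0$ (which we are free to pick) so that everything closes uniformly in $i$.
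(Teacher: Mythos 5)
The paper itself does not prove this lemma --- it simply states that it ``is shown in Kortschak and Hashorva (2013)'' --- so there is no internal proof to compare against, and I evaluate your attempt on its own.

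Your starting point is correct and is surely the key idea: using $\|\a_{u,i}\|=\|\a_{u,j}\|=1$ and $\sigma_{ij}(u)=\langle\a_{u,i},\a_{u,j}\rangle$, decompose $\vk v$ along $\a_{u,j}$ to get $\theta_i=\sigma_{ij}(u)\theta_j+\sqrt{1-\sigma_{ij}(u)^2}\sqrt{1-\theta_j^2}\cos\varphi$, whence
$\theta_i\le\sigma_{ij}(u)\theta_j+\sqrt{1-\sigma_{ij}(u)^2}\sqrt{1-\theta_j^2}$. However, the way you then manipulate this bound creates an avoidable problem. You expand $\sigma_{ij}(u)\theta_j\le\sigma_{ij}(u)+c/\log u$ and get an additive term $c/\log u$ that you then try to absorb into the $\sqrt{(1-\sigma_{ij}^2)/\log u}$ term; as you yourself note, this fails when $1-\sigma_{ij}(u)^2=O(1/\log u)$, and the ``checked by hand'' fallback is not carried out. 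The cleaner route, which matches exactly the form condition \eqref{conditionrho} is written in (and the form the paper itself uses in the heuristic $\epsilon$-measure of Section~\ref{section:numerics}), is not to separate $\theta_j$ out of the product. Since \eqref{conditionrho} forces $M:=\frac{\beta_j}{\beta_i}\frac{\log(\epsilon e^*_i(u))}{\log u}>\sigma_{ij}(u)$ eventually, and $\theta_j>0$, the target $\theta_i\le\theta_j M$ is (after dividing by $\theta_j$) equivalent to
$\sigma_{ij}(u)+\sqrt{1-\sigma_{ij}(u)^2}\,\sqrt{1/\theta_j^2-1}\le M$. For $\theta_j>1-c/\log u$ one has $\sqrt{1/\theta_j^2-1}\le\sqrt{2c/\log u}\,(1+o(1))$, and then invoking \eqref{conditionrho} with constant $c'=\sqrt{2c}\,(1+\ve)$ closes the step immediately, with no extraneous $c/\log u$ to absorb and no case distinction on $|\sigma_{ij}(u)|\to1$.

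The remaining, genuine gap in your proposal is the index mismatch: \eqref{conditionrho} delivers $e^*_i$ on the right-hand side, the lemma asserts $e^*_j$. You try to dismiss this by asserting $\log(e^*_i(u))/\log u$ and $\log(e^*_j(u))/\log u$ have the same limit, but the only source for that in the paper is \eqref{condc}, which is a hypothesis of Theorem~\ref{theorem:secondorder}, not of Theorem~\ref{mytheo3} or of this lemma, and it is not implied by \eqref{euINF} alone. Likewise your parenthetical ``$\log(\epsilon e^*_j(u))/\log u$ stays bounded away from $0$ (true by \eqref{euINF})'' does not follow: \eqref{euINF} asserts $e(u)\to\infty$, not that $e^*_j(u)\to\infty$ (which, from $e^*_j(u)=\beta_j\gamma_u\lambda_j v^{\beta_j\gamma_u-1}e(v)$ with $v=(u/\lambda_j)^{1/(\beta_j\gamma_u)}$, is not automatic when $\beta_j\gamma_u<1$), and even $e^*_j(u)\to\infty$ would not by itself keep $\log(e^*_j(u))/\log u$ away from $0$. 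So the $e^*_i\leftrightarrow e^*_j$ swap is the point that really needs an argument in the cited reference, and your proposal currently appeals to facts that are not available under the lemma's stated hypotheses.
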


\begin{lem}\label{remarkunivariateasymptotic}
Let $R$ be a positive random variable, and let $\ftj$ be given by
\BQN\label{dtheta}
\ftj(x)&=&\frac{\Gamma(d/2)}{\sqrt{\pi} \Gamma((d-1)/2)} (1-x^2)^{\frac{d-3}2}, \quad x\in  (0,1),
\EQN
\uE{with $\Gamma(\cdot)$ the Euler's Gamma function.} \hH{If $F$ is in the Gumbel MDA with some positive scaling function $e(\cdot)$}, then for any
\hH{$\beta,\lambda$ positive}
 \begin{equation}
\PP(X_j(u)>u)=\int_{0}^1\PP\left( \lambda e^{R \theta \beta \gamma_ {u}}>u\right) \ftj(\theta) d \theta\\
\sim  \frac{2^{\cD{\frac{d-\cK{3}}2}}\Gamma(d/2)}{\sqrt{\pi}}\left(\frac{e^*(u)}{u\log(u)}\right)^{\frac{d-1} 2}\PP\left( \lambda e^{R \beta \gamma_ {u}}>u \right), \quad u\to \IF.
 \end{equation}
\end{lem}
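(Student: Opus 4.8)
The displayed equality is the standard integral representation: the $j$th row of $A_u$ has Euclidean norm $\sqrt{\sigma_{jj}(u)}=1$, so the one--dimensional projection $(A_u\U)_j$ of $\U$ in that direction has the law of a single coordinate of $\U$, whose density on $(-1,1)$ is the even extension of $\ftj$; conditioning on this projection and using the independence of $R$ gives the integral over $(-1,1)$, which reduces to $(0,1)$ since $\lambda e^{R\theta\beta\gamma_u}\le\lambda<u$ for $\theta\le0$ and $u>\lambda$. Writing $\overline{\widetilde F}:=1-\widetilde F$ for the tail of $\widetilde R=\exp(R)$, which (as recalled before the lemma) belongs to the Gumbel MDA with scaling function $e(\cdot)$, and setting $v_u:=(u/\lambda)^{1/(\beta\gamma_u)}$, one has $\PP(\lambda e^{R\theta\beta\gamma_u}>u)=\overline{\widetilde F}(v_u^{1/\theta})$ and $\PP(\lambda e^{R\beta\gamma_u}>u)=\overline{\widetilde F}(v_u)$, so it suffices to prove that $\int_0^1\overline{\widetilde F}(v_u^{1/\theta})\,\ftj(\theta)\,d\theta\sim\frac{2^{(d-3)/2}\Gamma(d/2)}{\sqrt\pi}\,\delta_u^{(d-1)/2}\,\overline{\widetilde F}(v_u)$, where $\delta_u:=e(v_u)/(v_u\log v_u)$; then \eqref{eq:eyj} and the identity $\beta\gamma_u\log v_u=\log(u/\lambda)$ give $\delta_u/\bigl(e^*(u)/(u\log u)\bigr)=\log u/\log(u/\lambda)\to1$, i.e.\ $\delta_u\sim e^*(u)/(u\log u)$, which turns this asymptotic into the asserted one.

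The integral is of Watson/Laplace type: since $\overline{\widetilde F}(v_u^{1/\theta})$ decreases in $\theta$ and $e(v_u)=o(v_u)$, the mass concentrates in an $O(\delta_u)$--neighbourhood of $\theta=1$. I would substitute $\theta=1-\delta_u t$, $d\theta=-\delta_u\,dt$, so $\theta\in(0,1)$ corresponds to $t\in(0,1/\delta_u)$. For each fixed $t>0$, $1/\theta-1=\delta_u t(1+o(1))$, and since $\delta_u\log v_u=e(v_u)/v_u\to0$ this yields $v_u^{1/\theta}-v_u=v_u(e^{(1/\theta-1)\log v_u}-1)=e(v_u)(t+o(1))$; by the locally uniform Gumbel convergence of $\widetilde F$ it follows that $\overline{\widetilde F}(v_u^{1/\theta})/\overline{\widetilde F}(v_u)\to e^{-t}$. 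Moreover $\ftj(1-\delta_u t)\,\delta_u^{-(d-3)/2}=\frac{\Gamma(d/2)}{\sqrt\pi\,\Gamma((d-1)/2)}\bigl(t(2-\delta_u t)\bigr)^{(d-3)/2}\to\frac{\Gamma(d/2)}{\sqrt\pi\,\Gamma((d-1)/2)}\,2^{(d-3)/2}t^{(d-3)/2}$. Hence the rescaled integrand $\overline{\widetilde F}(v_u^{1/\theta})\ftj(\theta)\big/\bigl(\overline{\widetilde F}(v_u)\delta_u^{(d-3)/2}\bigr)$ converges pointwise to $\frac{\Gamma(d/2)}{\sqrt\pi\,\Gamma((d-1)/2)}2^{(d-3)/2}t^{(d-3)/2}e^{-t}$, whose integral over $(0,\IF)$ equals $\frac{\Gamma(d/2)}{\sqrt\pi}2^{(d-3)/2}$ because $\int_0^\IF t^{(d-3)/2}e^{-t}\,dt=\Gamma((d-1)/2)$, which is exactly the constant claimed.

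To pass from the pointwise limit to the asymptotic equivalence, the $\liminf$ half is immediate from Fatou's lemma. For the $\limsup$ half I would split $(0,1)=(0,1-\epsilon)\cup(1-\epsilon,1)$. On $(0,1-\epsilon)$, monotonicity gives $\int_0^{1-\epsilon}\overline{\widetilde F}(v_u^{1/\theta})\ftj(\theta)\,d\theta\le\overline{\widetilde F}(v_u^{1/(1-\epsilon)})$; since $v_u^{1/(1-\epsilon)}=v_u\cdot v_u^{\epsilon/(1-\epsilon)}$ with $v_u^{\epsilon/(1-\epsilon)}\to\IF$, the representation \eqref{rep} applied to $\widetilde F$ (with scaling function $e$) together with $e(s)=o(s)$ yields $\overline{\widetilde F}(v_u^{1/(1-\epsilon)})/\overline{\widetilde F}(v_u)\le v_u^{-M}$ for every fixed $M$ and all large $u$, and this is $o\bigl(\delta_u^{(d-1)/2}\overline{\widetilde F}(v_u)\bigr)$ because $\delta_u^{-(d-1)/2}$ grows at most polynomially in $v_u$ (here \eqref{euINF} enters). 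On $(1-\epsilon,1)$ I would invoke dominated convergence on the rescaled integrand, with a dominating function of the form $C\,t^{(d-3)/2}e^{-ct}$: the polynomial factor is immediate from $\ftj(1-\delta_u t)\delta_u^{-(d-3)/2}\le C'\,t^{(d-3)/2}$ for $\delta_u t<\epsilon<1$, and the exponential factor comes from \eqref{rep}, the ultimate monotonicity of $e$, and \eqref{coEU}, which together give $\overline{\widetilde F}(v_u^{1/\theta})/\overline{\widetilde F}(v_u)\le C e^{-ct}$ uniformly for $\theta\in(1-\epsilon,1)$ and all large $u$. Letting $u\to\IF$ and observing that the value of $\epsilon$ is immaterial (after the substitution the domain $(0,\epsilon/\delta_u)$ already fills $(0,\IF)$) completes the proof.

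The only delicate point is the uniform tail estimate $\overline{\widetilde F}(v_u^{1/\theta})/\overline{\widetilde F}(v_u)\le Ce^{-ct}$ used in the dominated--convergence step: the Gumbel convergence is only \emph{locally} uniform, so uniformity over the whole range of $t$ must be extracted from the representation \eqref{rep} by bounding $\int_{v_u}^{v_u^{1/\theta}}ds/e(s)$ from below, which is where the monotonicity of $e$ and condition \eqref{coEU} are needed. This estimate, together with the self--neglecting and rapid--variation properties of the scaling function used above, is precisely the content of the argument in Berman (1992) and Hashorva (2012); the remaining ingredients — the integral representation of $\PP(X_j(u)>u)$, the identity $\delta_u\sim e^*(u)/(u\log u)$, and the value of the Gamma integral — are routine.
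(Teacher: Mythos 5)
Your proof is sound and follows the natural Laplace/Watson approach; note that the paper does not actually supply its own proof for this lemma but simply cites Berman (1992) and Hashorva (2012), so there is nothing internal to compare against beyond the statement. All the concrete computations check out: the reduction of the integral from $(-1,1)$ to $(0,1)$, the substitution $\theta=1-\delta_u t$ with $\delta_u=e(v_u)/(v_u\log v_u)$, the pointwise limit of the rescaled integrand, the conversion $\delta_u\sim e^*(u)/(u\log u)$ via $\beta\gamma_u\log v_u=\log(u/\lambda)$, and the evaluation $\int_0^\infty t^{(d-3)/2}e^{-t}\,dt=\Gamma((d-1)/2)$ collapsing the constant to $2^{(d-3)/2}\Gamma(d/2)/\sqrt{\pi}$.

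The one place where you go beyond the lemma's own hypotheses is the dominated-convergence step on $(1-\epsilon,1)$: you invoke ultimate monotonicity of $e$ and condition \eqref{coEU}, which are hypotheses of Theorem \ref{theorem:secondorder} rather than of the lemma as stated (the lemma's standing assumptions are only Gumbel MDA with scaling $e$, together with \eqref{euINF}). Since the lemma is applied exclusively inside that theorem this is not a flaw in context, but a self-contained proof under the lemma's hypotheses would need a dominating function obtained without those extras. One standard route: choose an absolutely continuous version of $e$ with $e'(u)\to0$ (always possible for a Gumbel auxiliary function), so that for any $\epsilon>0$ and $u$ large, $e(v+y)\le e(v)+\epsilon y$ for all $y\ge0$; plugging this into the integral $\int_v^{v+y}ds/e(s)$ in the representation \eqref{rep} yields $\overline{\widetilde F}(v+y)/\overline{\widetilde F}(v)\lesssim(1+\epsilon y/e(v))^{-(1-\epsilon)/\epsilon}$, a polynomial bound of arbitrarily high degree in $y/e(v)$. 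Combined with $y_u(t)\ge e(v_u)t$ on the relevant range, this dominates $t^{(d-3)/2}$ times the rescaled integrand by an integrable function, without appealing to \eqref{coEU} or monotonicity. Otherwise your argument is complete and correct.
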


\def\vuu{\bH{\xi(u)}}

In the sequel for two  positive functions $g_1, g_2$ we write  \cK{$g_1 \lesssim g_2$ respectively $g_1 \gtrsim g_2$ if $\limsup_{u\to \IF} g_1(u)/g_2(u)\le 1 $ respectively $\limsup_{u\to \IF} g_1(u)/g_2(u)\ge 1 $.}

\cD{
\begin{lem}\label{lemmaboundtaillog} Under the assumptions of \nelem{remarkunivariateasymptotic}, if further
\BQN
\limit{u} \frac{ \rE{e(u)}\log(u) }{u}&=& c_0
\label{fred}
\EQN
 holds for some constant $c_0\in [0,\IF)$, then
for any $c>1,c'>c_0$
 \begin{equation*}
\frac{\PP\left(R >\log(cu)\right)}{ \PP\left( R >\log(u)\right)}\lesssim u^{-\frac{\log(c)}{c'}}
 \end{equation*}
is valid \qE{for all $u$ large}.
 \end{lem}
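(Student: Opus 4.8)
The plan is to work directly from the Resnick representation \eqref{rep} of $\overline F:=1-F$ and to convert the scaling-function hypothesis \eqref{fred} into a pointwise lower bound for $1/b(\cdot)$. Since $\PP(R>x)=\overline F(x)$ and $\log(cu)=\log u+\log c$, \eqref{rep} gives, for all large $u$,
\[
\frac{\PP(R>\log(cu))}{\PP(R>\log u)}=\frac{c(\log(cu))}{c(\log u)}\,\exp\Bigl(-\int_{\log u}^{\log u+\log c}\frac{g(t)}{b(t)}\,dt\Bigr),
\]
and $c(\log(cu))/c(\log u)\to1$ as $u\to\IF$ because $c(\cdot)\to1$. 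So it remains only to bound the exponential factor.

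Next I would record that \eqref{fred} is equivalent to $\lim_{t\to\IF}t\,b(t)=c_0$: from $e(v)=v\,b(\log v)$ we get $b(t)=e(e^{t})/e^{t}$, and substituting $v=e^{t}$ in \eqref{fred} yields $t\,b(t)=e(e^{t})\log(e^{t})/e^{t}\to c_0$. Hence, fixing any $c''$ with $c_0<c''<c'$ (any $c''\in(0,c')$ when $c_0=0$), there is $t_1$ with $1/b(t)\ge t/c''$ for all $t>t_1$; and, since $g(\cdot)\to1$, for any $\ve\in(0,1)$ there is $t_2$ with $g(t)\ge1-\ve$ for $t>t_2$. Then for every $u$ with $\log u>\max(t_1,t_2)$,
\[
\int_{\log u}^{\log u+\log c}\frac{g(t)}{b(t)}\,dt\ \ge\ \frac{1-\ve}{c''}\int_{\log u}^{\log u+\log c}t\,dt\ =\ \frac{1-\ve}{c''}\Bigl(\log c\,\log u+\tfrac{(\log c)^{2}}{2}\Bigr)\ \ge\ \frac{(1-\ve)\log c}{c''}\,\log u,
\]
because $\log c>0$. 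Consequently the exponential factor is at most $u^{-(1-\ve)\log c/c''}$.

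Finally, since $c''<c'$ I can choose $\ve$ small enough that $(1-\ve)/c''\ge1/c'$ (any $\ve\le1-c''/c'$ works), so that $u^{-(1-\ve)\log c/c''}\le u^{-\log c/c'}$ for $u\ge1$; together with $c(\log(cu))/c(\log u)\to1$ this yields $\limsup_{u\to\IF}\bigl(\PP(R>\log(cu))/\PP(R>\log u)\bigr)\big/u^{-\log c/c'}\le1$, which is exactly the asserted $\PP(R>\log(cu))/\PP(R>\log u)\lesssim u^{-\log c/c'}$. There is no real obstacle here; the only point requiring a little care is precisely this last juggling of the two auxiliary parameters $c''$ and $\ve$, which is needed to absorb the error from $g(\cdot)\to1$ and recover the clean exponent $-\log c/c'$, and to keep the argument valid (with $c''$ arbitrarily small) in the boundary case $c_0=0$.
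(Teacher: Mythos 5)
Your proof is correct and follows essentially the same route as the paper: Resnick's Gumbel-MDA representation plus the observation that \eqref{fred} gives an eventual lower bound on $1/b(\cdot)$ (equivalently, an upper bound on $e(\cdot)$) that makes the integral grow like $(\log c/c')\log u$. The only cosmetic difference is that you work with $F$ and $b(\cdot)$ on the $\log u$-scale while the paper applies the same representation to $\widetilde F=F(\log\cdot)$ and $e(\cdot)$ on the $u$-scale (the substitution $x=e^t$, $dx/e(x)=dt/b(t)$ identifies the two integrals); your handling of the auxiliary parameters $c''$ and $\ve$ is a bit more explicit than the paper's but carries the same content.
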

}
\begin{proof} \cD{
By \eqref{fred} and the representation \eqref{rep} of the scaling function $e(\cdot)$ we obtain for $c'>c_0$
and $\ve>0, 1/u$ sufficiently small}
\begin{align*}
\frac{\PP\left(\cE{R > \log(cu)}\right)}{\PP\left(R > \log(u)\right)} &\lesssim  \exp\left(-\cE{(1-\ve)}\int_{u}^{cu} \frac{1}{e(x)} dx\right)\\
&\lesssim  \exp\left(- \frac{1}{c'} \int_{u}^{cu} \frac{\log(x)}{x} dx\right)\\
&=\exp\left(- \frac{1}{c'} \int_{1}^{c} \frac{\log(u x)}{x} dx\right)\\
&\le\exp\left(- \frac{1}{c'} \int_{1}^{c} \frac{\log(u )}{x} dx\right)%\\
%&=u^{-\frac{\log(c)}{c'}}.
\end{align*}
thus the claim follows.
\end{proof}

\EH{
Next we shall consider the case that $e(\cdot)$  is O-regularly varying which means (see e.g., Bingham et al.\ (1987))
\BQN\label{Oreg}
0 < \liminf_{u \to \IF} \frac{ e(\lambda u)}{e(u)}\le \limsup_{u \to \IF} \frac{ e(\lambda u)}{e(u)} < \IF, \quad
\forall \lambda>1.
\EQN
}
\begin{lem}\label{lemma:expbound}
If  $e(\cdot)$  satisfies \eqref{Oreg}, then there exist $\alpha, M, u_0$ positive and \rE{$\ve\in (0,1)$ such that}
\BQNY
\frac{\PP(R > \log( u+xe(u)))}{\PP(R> \log (u))} %I_{\{x<c u/e(u)\}}
&\le  &\cE{(1+\ve)} e^{-\cE{(1-\ve)}\cD{M}(1+c)^{-\alpha} x}
\EQNY
holds for any $c>0$  and  $x \in (0,\frac{c u}{e(u)})$ with $u>u_0$.
\end{lem}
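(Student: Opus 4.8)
The plan is to exploit the representation \eqref{rep} of $\overline{F}=1-F$ together with the O-regular variation hypothesis \eqref{Oreg} on $e(\cdot)$, proceeding exactly as in the proof of \nelem{lemmaboundtaillog} but keeping track of the $x$-dependence uniformly. First I would write
$$
\frac{\PP(R > \log(u+xe(u)))}{\PP(R > \log u)}
= \frac{\overline F(\log(u+xe(u)))}{\overline F(\log u)}
\le (1+\ve)\exp\left(-(1-\ve)\int_{u}^{u+xe(u)} \frac{dt}{e(t)}\right)
$$
for all $u \ge u_0$; here the factor $(1+\ve)/(1-\ve)$ absorbs the slowly varying prefactor $c(\cdot)$ and the correction $g(\cdot)\to1$ in \eqref{rep} after the change of variables $t\mapsto\log t$ that turns $b(\log t)$ into $e(t)/t$, and then bounding $g$ on the relevant range. (Since we only claim the bound for $u$ large, we may choose $u_0$ so that $c(v_2)/c(v_1)\le 1+\ve$ and $g(v)\ge 1-\ve$ for all $v\ge \log u_0$, using $c,g\to1$.) So it suffices to bound $\int_{u}^{u+xe(u)} dt/e(t)$ from below by a constant multiple of $x/(1+c)^{\alpha}$.

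The key step is the lower bound on $e(t)$ for $t$ in the integration range $[u,\,u+xe(u)]\subseteq[u,\,u(1+c)]$ — note that by \eqref{eq:w}-type behaviour (which follows from the Davis–Resnick / rapid-variation remark, or simply from $e(u)/u\to0$, implied by $b\to0$) we have $x e(u) \le c u$, so every $t$ in the range satisfies $t \le (1+c)u$, i.e.\ $t = \lambda u$ with $\lambda \in [1,1+c]$. By \eqref{Oreg}, $\liminf_{u\to\IF} e(\lambda u)/e(u) > 0$ for each fixed $\lambda$; the standard uniform convergence theorem for O-regularly varying functions (Bingham, Goldie and Teugels (1987), Thm.\ 2.0.1 and its consequences on the Matuszewska indices) upgrades this to a uniform bound of the form $e(\lambda u)/e(u) \ge M \lambda^{-\alpha}$ for all $\lambda \ge 1$ and all $u \ge u_0$, with suitable $M>0$ and $\alpha>0$ (the lower Matuszewska index controls $\alpha$). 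Consequently $e(t) \ge M (t/u)^{-\alpha} e(u) \ge M (1+c)^{-\alpha} e(u)$ for every $t\in[u,u(1+c)]$, whence
$$
\int_{u}^{u+xe(u)} \frac{dt}{e(t)} \ge \frac{1}{M(1+c)^{-\alpha} e(u)}\int_{u}^{u+xe(u)} dt = \frac{x}{M(1+c)^{-\alpha}}
\cdot\frac{e(u)}{e(u)} \cdot \frac{1}{1},
$$
wait — more carefully, $\int_u^{u+xe(u)} dt = xe(u)$, so the integral is at least $xe(u)\big/\big(M^{-1}(1+c)^{\alpha}e(u)\big)^{-1}$; renaming the constant, $\int_{u}^{u+xe(u)} dt/e(t) \ge M(1+c)^{-\alpha} x$ for an appropriate $M>0$. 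Plugging this into the displayed inequality yields
$$
\frac{\PP(R > \log(u+xe(u)))}{\PP(R > \log u)} \le (1+\ve)\, e^{-(1-\ve) M (1+c)^{-\alpha} x},
$$
which is the claim after renaming $M(1-\ve)$ — but since the statement keeps $(1-\ve)$ explicit, we simply retain the constant $M$ from the Matuszewska bound.

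The main obstacle I anticipate is the \emph{uniformity in $c$ and $x$} simultaneously: one must ensure the Matuszewska-type lower bound $e(\lambda u)/e(u)\ge M\lambda^{-\alpha}$ holds for \emph{all} $\lambda\ge1$ with constants $M,\alpha$ independent of $\lambda$ and $u$ (not just for each fixed $\lambda$), and that the same $u_0$ works — this is precisely the content of the uniform O-variation theory, and it is where \eqref{Oreg} is essential rather than just pointwise $\liminf/\limsup$ finiteness. The slowly-varying factor $c(\cdot)$ and the $g(\cdot)\to1$ term in \eqref{rep} are harmless because they contribute only the $(1\pm\ve)$ factors once $u_0$ is chosen large, and the restriction $x<cu/e(u)$ is exactly what confines $t$ to $[u,(1+c)u]$ so that a single bound $(1+c)^{-\alpha}$ suffices over the whole integration interval.
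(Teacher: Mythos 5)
Your strategy matches the paper's proof: express the ratio via the representation \eqref{rep} after a change of variables, then lower-bound $\int_u^{u+xe(u)}dt/e(t)$ by a constant multiple of $x$ using a Potter-type bound for the O-regularly varying scaling function $e(\cdot)$, and absorb $c(\cdot)$ and $g(\cdot)$ into $(1\pm\ve)$ factors. However, you invoke the wrong direction of the Potter bound. To bound $\int dt/e(t)$ from \emph{below} you need an \emph{upper} bound on $e(t)$ over $t\in[u,(1+c)u]$, namely $e(t)/e(u)\le M^{-1}(t/u)^{\alpha}$, which comes from the \emph{upper} Matuszewska index (this is exactly how the paper invokes Proposition 2.2.1 of Bingham et al.\ (1987), writing it as $e(x)/e(y)\ge M(x/y)^{\alpha}$ for $y\ge x\ge u_0$). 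What you wrote, $e(\lambda u)/e(u)\ge M\lambda^{-\alpha}$ ``from the lower Matuszewska index,'' is a \emph{lower} bound on $e(t)$; this gives an \emph{upper} bound on $\int dt/e(t)$, which runs the wrong way for the lemma. Your ``more carefully'' sentence does not repair this: the expression $xe(u)\big/\bigl(M^{-1}(1+c)^{\alpha}e(u)\bigr)^{-1}$ is algebraically garbled (it has an $e(u)^2$), and the concluding ``renaming the constant'' silently flips the inequality. The fix is immediate and brings you in line with the paper: from $e(t)\le M^{-1}(1+c)^{\alpha}e(u)$ on $[u,(1+c)u]$ one gets $1/e(t)\ge M(1+c)^{-\alpha}/e(u)$, hence $\int_{u}^{u+xe(u)}dt/e(t)\ge M(1+c)^{-\alpha}x$, and the claimed exponential bound follows.
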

\begin{proof} By Proposition 2.2.1 of Bingham et al. (1987) there exists $M$, $u_0$ and $\alpha$ such that for all $y\ge x\ge u_0$
$$
\frac{e(x)}{e(y)}\ge M (x/y)^\alpha.
$$
By the representation \eqref{rep} of the scaling function $e(\cdot)$  and the assumptions, for \qE{$x \in (0, c e(u)/u)$}
\rE{for some $\ve \in (0,1)$} we may write
\begin{align*}
\frac{\PP(e^R>u+xe(u))}{\PP(e^R>u)} %I_{\{x<c u/e(u)\}}
& \le \cE{(1+\ve)}\exp\left(\cE{-(1- \ve)}\int_{u}^{u+xe(u)} \frac{1}{e(y)}\, dy \right)\\%I_{\{x<c u/e(u)\}}\\
& = \cE{(1+\ve)} \exp\left(\cE{-(1- \ve)} \int_{\cE{0}}^{x} \frac{e(u)}{e(u+ye(u))}\, dy \right)\\%I_{\{x<c u/e(u)\}}\\
& \le \cE{(1+\ve)} \exp\left(\cE{-(1- \ve)} M \int_{\cE{0}}^{x} \left(\frac{u}{u+ye(u)}\right)^{\alpha} dy \right)\\%\le (1+\c_1 e^{-c^{-\alpha}x}
&\le{(1+\ve)} \exp\left({-(1- \ve)} M \int_{\cE{0}}^{x} \left(1+c\right)^{-\alpha} dy \right),
\end{align*}
hence the proof follows. \end{proof}

%%%%%%%%%%%%%%%%%%%%%%%%%%%%%%%%%%%%%%%%%%%%%%%%%%%%
%%%%%%%%%%%%%%%%%%%%%%%%%%%%%%%%
\prooftheo{theorem:secondorder}
For all $u$ positive we have
\[
\Upsilon(u)= \PP(S(u)>u)- \sum_{j=1}^d \PP\left(X_j(u)>u\right) =\sum_{j=1}^d \Biggl(\PP\left(S(u)>u,X_j(u)>\max_{ j\not=i} X_j(u) \right)- \PP\left(X_j(u)>u\right) \Biggr).
\]
For $j$ with $\beta_j<\beta_1$ \rE{we have further}
\[
\PP\left(S(u)>u,X_j(u)>\max_{ j\not=i} X_i(u) \right)\le \PP\left(X_j(u) >u/d\right)=\PP\left(X_1(u) > \lambda_1\left(\frac{u}{\lambda_i d} \right)^{\beta_1/\beta_j}\right)=o\left(\Upsilon(u)\right)
\]
as $u\to \IF$.
Hence we only \rE{need to \rE{derive} the second order asymptotics of}
\[
 \PP\left(S(u)>u,X_j(u)>\max_{ j\not=i} X_i(u) \right),
\]
  with $\beta_j=\beta_1$.
Note that by straightforward arguments it follows from  \eqref{coEU} and the monotonicity of $e(\cdot)$ that the scaling function $e(\cdot)$ is O-regularly varying.  Define $ \vk{\Theta}:=A_u \vk{U}$ and write ${\Theta_i}$ for the $i$th component of ${\vk{\Theta}}_u$. Choose an index  $j$ with $\beta_1=\beta_j$, and
suppose without loss of generality that depending on $j$ an $A_u$ is chosen such that  $\bH{\Theta_j}=U_j$, with $U_j$ the $j$th component of
$\U$ which is uniformly distributed on the unit sphere of $\R^d$.   Lemma \ref{lemmaboundtaillog} implies
that we can choose a $k$ such that for
$$a(u)=1-\frac{k}{\log(u)}$$
we have
\begin{equation}
\limit u \frac{ \PP\left(e^{R  \beta_j \gamma_u}>\left(\frac u{\lambda_j d}\right)^{1/a(u)}\right)}{u^{-3- \frac{d-1}2} \PP\left(\lambda_j e^{R  \beta_j \gamma_u}>u\right) }=0.
\label{equsecond1}
\end{equation}
Note that it follows from \eqref{equsecond1} and Lemma \ref{remarkunivariateasymptotic} that for all $u$ large
\begin{align*}
 \PP\left(S(u)>u ,X_j(u)=\max_{i\not=j}X_i(u),\Theta_j\le a(u)\right) &\le \PP\left(X_j(u)>u/d,\Theta_j\le a(u)\right)\\&\le \PP\left( e^{R  \beta_j \gamma_u}>\left(\frac u{\lambda_j d}\right)^{1/a(u)}\right)\\
&=o\left(u^{-3- \frac{d-1}2} \PP\left(\lambda_j e^{R  \beta_j \gamma_u}>u\right)\right)\\
&=o\left(u^{-2} \PP\left(X_j(u)>u\right)\right)=o\left(\Upsilon(u)\right).
\end{align*}
\pE{Therefore, we need to determine} the second order asymptotics of $\PP\left(S(u)>u ,X_j(u)=\max_{i\not=j}X_i(u),\Theta_j> a(u)\right)$.
Denote by $f_{-j}(\vk{\theta}_{-j}|\theta)$ the conditional \rE{probability density function} of $\vk{\Theta}_{-j}:=(\Theta_1,\ldots,\Theta_{j-1},\Theta_{j+1},\ldots,\Theta_d)$ given $\Theta_j=\theta$.  We get by
\eqref{conditionrho} (compare Lemma \ref{cor:last}) that for sufficiently large $u$ with $\ftj$ given by \eqref{dtheta}
\begin{align*}
& \int_{a(u)}^1  \PP\left(\left.\sum_{i=1}^d \lambda_i e^{R \Theta_i  \beta_i \gamma_u}>u,\lambda_j  e^{R \Theta_j\beta_j \gamma_u}>\max_{k\not=j}\lambda_k  e^{R \Theta_k\beta_k \gamma_u}\right|\Theta_j=\theta_j \right) \ftj(\theta_j) d \theta_j\\
&\cD{=}\int_{a(u)}^1\PP\left(\left.\sum_{i=1}^d \lambda_i e^{R \Theta_i \beta_i \gamma_u}>u\right|\Theta_j=\theta \right) \ftj(\theta) d \theta\\
& =\int_{a(u)}^1\int\PP\left(\left.\sum_{i=1}^d \lambda_i e^{R \Theta_i  \beta_i \gamma_u}>u\right|\Theta_1=\theta_1,\ldots,\Theta_d= \theta_d \right) \ftj(\theta_j)f_{\qE{\vk{\theta}_{-j}}}(\qE{\vk{\theta}_{-j}},\theta_j) d \qE{\vk{\theta}_{-j}} d \theta_j.
\end{align*}
\def\ttH{\uE{\theta}}
 For sufficiently large $u$ define the function
 $$g(u)=\left\{e^{r(u)}, \uE{\text{where $r(u)$ is such that }} \sum_{i=1}^d \lambda_i e^{r(u) \ttH_i  \beta_i \gamma_u}=u\right\}.$$
\uE{Hence, for all $u$ large}
$$
\PP\left(\left.\sum_{i=1}^d \lambda_i e^{R \Theta_i \beta_i \gamma_u}>u\right|\Theta_1=\theta_1,\ldots,\Theta_d= \theta_d  \right)=\PP\left(e^R>g(u)\right)=\PP\left(e^R>g_0(u)+ g_1(u)\right),
$$
where
$$
g_0(u)=\left(\frac{u}{\lambda_j}\right)^{\frac 1{\ttH_j \beta_j\gamma_u}}, \quad g_1(u)=g(u)+g_0(u), \quad u>0,
$$
with $g_1(u)<0$ for all $u$ positive. From Lemma \ref{cor:last} we get that uniformly in $\vk{\theta}=\qE{(\theta_1 \ldot \theta_d)}$ with $\theta_j>a(u)$ $\limit{u}g_1(u)/g_0(u)=0$ and hence for some $|\xi|\le g_1(u)$ the following equalities are equivalent
\begin{align*}
 u&=\sum_{i=1}^d \lambda_i (g_0(u)+g_1(u))^{ \ttH_i \beta_i \gamma_u}\\
 u&= \lambda_i (g_0(u))^{ \ttH_j \beta_j \gamma_u} +\ttH_j \beta_j \gamma_u\lambda_i g_1(u) (g_0(u)+\xi)^{ \ttH_j \beta_j \gamma_u-1}+\sum_{i\not= j}^d \lambda_i (g_0(u)+g_1(u))^{ \ttH_i \beta_i \gamma_u}\\
-\ttH_j g_1(u) &=\left(\frac{g_0(u)+g_1(u)}{g_0(u)+\xi}\right)^{ \ttH_j \beta_j \gamma_u-1} \sum_{i\not= j}^d \frac{\lambda_i}{\beta_j\gamma\lambda_j} (g_0(u)+g_1(u))^{\gamma_u( \ttH_i \beta_i -\ttH_i \beta_i)+1}.
\end{align*}
\rE{It} follows that for some  $c>0$ and uniformly in $\vk{\theta}$ with $\theta_j>a(u)$
\begin{align*}
g_1(u)&\sim -\sum_{i\not=j}\frac{\lambda_i}{\cD{\beta_j\gamma}\lambda_j} g_0(u)^{\gamma_u(\ttH_i\beta_i-\ttH_j\beta_j)+1}\\
&\gtrsim  -g_0(u)^{-\gamma_u\ttH_j\beta_j+1} \sum_{i\not=j} \frac{\lambda_i}{\cD{\beta_j\gamma}\lambda_j}g_0(u)^{\gamma_u \beta_j \ttH_{\cD{j}}\frac {\log(\epsilon e^*_j(u))}{\log(u)}}\gtrsim -cg_0(u)^{-\gamma_u\ttH_j\beta_j+1}  \epsilon e^*_j(u).
\end{align*}
Since the scaling function $e(\cdot)$ is O-regularly varying   we get for some $c_1>0$ that $ |g_1(u)| \lesssim c_1 \epsilon e(g_0(u))$ for any  $\epsilon>0$ and uniformly in $\vk{\theta}$ with $\theta_j>a(u)$.
Taylor expansion implies for a $g_0(u)+g_1(u)\le \xi_u\le g_0(u) $
\begin{align*}
\PP\left(e^R>g(u)\right)&=\PP\left(e^R>\left(\frac{u}{\lambda_j}\right)^{\frac 1{\Theta_j \beta_j\gamma_u}}\right)\cD{-}g_1(u)\cE{\widetilde{f}}(\xi_u)\\
&=\PP\left(e^R>\left(\frac{u}{\lambda_j}\right)^{\frac 1{\Theta_j \beta_j\gamma_u}}\right)- (1+o(1))g_1(u)\frac1{e(\xi_u)}\PP(R>\log(\xi_u)).
\end{align*}
Next, note  that with \eqref{equsecond1} and Lemma \ref{remarkunivariateasymptotic}  we get for {$u$ large enough such that $\PP(X_j(u)>u,\theta_j\le u)=0$ that}
\begin{align*}
&\int_{a(u)}^1\int\PP\left(\left.e^R>\left(\frac{u}{\lambda_j}\right)^{\frac 1{\Theta_j \beta_j\gamma_u}}\right|\Theta_1=\theta_1,\ldots,
\Theta_d=\theta_d \right) \ftj(\theta_j)f_{\qE{\vk{\theta}_{-j}}}(\qE{\vk{\theta}_{-j}},\theta_j) d \qE{\vk{\theta}_{-j}} d \theta_j\\
&=\PP\left( X_j(u)>u\right) -\int_{0}^{a(u)}\PP\left(\left.\lambda_j e^{R \beta_j\gamma_u}> \lambda_j^{\frac 1{\Theta_j}+1} u^{\frac 1{\Theta_j}}\right|\Theta_j=\theta_j \right) \ftj(\theta_j) d \theta_j\\
&=\PP\left( X_j(u)>u\right)+\cD{o\left(u^{-3- \frac{d-1}2} \PP\left(\lambda_j e^{R  \beta_j \gamma_u}>u\right)\right)}\\
&=\cD{\PP\left( X_j(u)>u\right) +o\left(u^{-2} \PP\left(X_j(u)>u\right)\right)}.
\end{align*}
 We are left with finding  the \EH{asymptotics} of
\begin{align*}
&-\int_{a(u)}^1\int  g_1(u)\frac1{e(\xi_u)}\PP(R>\log(\xi_u))\ftj(\theta_j)f_{\qE{\vk{\theta}_{-j}}}(\qE{\vk{\theta}_{-j}},\theta_j) d \qE{\vk{\theta}_{-j}} d \theta_j\\
&\sim\sum_{i\not=j} \int_{a(u)}^1\int \frac{\lambda_i}{\cD{\beta_j\gamma}\lambda_j} g_0(u)^{\gamma_u(\ttH_i\beta_i-\ttH_j\beta_j)+1}\frac1{e(\xi_u)}\PP(R>\log(\xi_u))\ftj(\theta_j)f_{\qE{\vk{\theta}_{-j}}}(\qE{\vk{\theta}_{-j}},\theta_j) d \qE{\vk{\theta}_{-j}} d \theta_j.
\end{align*}

Since any scaling function, and therefore $e(\cdot)$ is self-neglecting (see Bingham et al.\ (1987) for the main properties), i.e.,
\BQNY
\frac{e(u+ x e(u))}{e(u)} \to 1, \quad u\to \IF
\EQNY
locally uniformly for $x\inr$
 we get that
\BQNY
g_1(u)\frac 1{e(\xi_u)}\PP(e^R>\xi_u)&\sim & g_1(u)\frac1{e(g_0(u))}\PP(e^R>g_0(u))\\
&=&g_1(u)\frac1 {e(g_0(u))}\PP\left(\lambda_je^{\beta_j \gamma_u \Theta_j}>u\right).
\EQNY
It follows that we need further to calculate the \EH{asymptotics} of
 \begin{align*}
\delta(u)&:= \int_{a(u)}^1\int_{-1}^1 \frac{\lambda_i}{\beta_j\gamma\lambda_j} g_0(u)^{\gamma_u(\ttH_i\beta_i-\ttH_j\beta_j)+1}\frac1{e(g_0(u))}\PP(R>\log(g_0(u)))\ftj(\theta_j)f_{ij}(\theta_{i} \lvert \theta_j)  d \qE{\vk{\theta}_{-j}} d \theta_j
\end{align*}
where $f_{ij}(\theta_{i} \lvert \theta) $ is the probability density function of $\Theta_i \lvert \Theta_j=\theta$. To evaluate $\delta(u)$
 we choose for  a specific index $i$  the  matrix $A_u$ in such a way that $a_{jj}=1$, $a_{ji}=\sigma_{ij}(u)$ and $a_{ii}=\sqrt{1-\sigma_{ij}(u)^2}$. If  $U_i$ and $U_j$ are two components of a random vector that is uniformly distributed on the $d$--dimensional unit sphere and  $V_j$ and $V_i$ are independent  and have the same distribution as the marginal distribution of random vector that is uniformly distributed on the $d$ respectively $(d-1)$--dimensional unit sphere, then Cambanis et al. (1981) Lemma 2 shows that $(U_i,U_j)$ can be represented as
\[
 (U_j,U_i) \stackrel d= \left(V_j,V_i\sqrt{1-V_j^2}\right).
\]
 It follows that we can assume that  $\Theta_j=V_j$ and $\Theta_i= \sigma_{ij}(u) V_j +\sqrt{1-\sigma_{ij}(u)^{\qE{2}}} \sqrt{1-V_j^2} V_i$. Define next the function $\widehat e_j(u,v)$ by
$$
\widehat e_j(u,v):=u \beta_j\gamma_{v}e\left(\left(\frac u{\lambda_j}\right)^ {\frac 1 {\beta_j\gamma_{v}}}\right)\left(\frac u{\lambda_j}\right)^ {-\frac 1 {\beta_j\gamma_{v}}}, \quad u,v>0
$$
and note that $\widehat e_j(u,v)=e_j^*(u)$. We have
$$
\frac{g_0(u)^{{-\gamma_u\theta_j\beta_j+1}}} {e(g_0(u))}= \frac{\lambda_j g_0(u)} { u e(g_0(u))}=\lambda_j\left( \frac{u}{\lambda_j}\right)^{1-\theta_j} \frac1{\widehat e_j\left(u^{1/\theta_j}(\lambda_j)^{1-1/\uE{\theta_j}},u\right)}.
$$
It follows that
\begin{align*}
\delta(u)
&= \frac{\Gamma(d/2)}{\pi \Gamma((d-2)/2)} \frac{\lambda_i}{\beta_j\gamma} \left(\frac{u}{\lambda_j}\right)^{\frac{\beta_i \sigma_{ij}(u)}{ \beta_j}}\int_{a(u)}^1\int_{-1}^1\left( \frac{u}{\lambda_j}\right)^{1- v_j} \left(\frac{u}{\lambda_j}\right)^{\frac{\beta_i v_i\sqrt{1-\sigma_{ij}(u)^2} \sqrt{1- v_j^2}}{ v_j \beta_j}} \\
&\quad \times\frac{\PP\left(\lambda_je^{\gamma_u  v_j\beta_jR}>u\right)}{\widehat e_j\left(u^{1/ v_j}(\lambda_j)^{1-1/ v_j},u\right)}(1- v_j^2)^{\frac{d-3}2} (1- v_i^2)^{\frac{d-4}2}d  v_{i}d v_{j}.
\end{align*}
 Substituting  $v_j=\frac{\log(u)}{\log(u)+\log(1+xe^*_j(u)/u) }=\frac{\log(u)}{\log(u+xe^*_j(u)) }$ we \vE{obtain (set next $\eta_j(u):=1+xe^*_j(u)/u$)}
 \begin{align*}
 \delta(u)&=  \frac{\Gamma(d/2)}{\pi \Gamma((d-2)/2)} \frac{\lambda_i}{\beta_j\gamma} \left(\frac{u}{\lambda_j}\right)^{\frac{\beta_i \sigma_{ij}(u)}{ \beta_j}}\\&\quad \times\int_0^{\frac{e^{\log(u)/a(u)} -u}{e_j^*(u)}}\int_{-1}^1 \frac{\log(u)}{(\log(u)+\log(\eta_j(u)))^2}
\frac{e^*_j(u)/u}{\eta_j(u)}\\
&\quad \times \left( \frac{u}{\lambda_j}\right)^{\frac{\log(\eta_j(u))}{\log(u)+\log(\eta_j(u)) }} \left(\frac{u}{\lambda_j}\right)^{\frac{\beta_iv_i\sqrt{1-\sigma_{ij}(u)^2} \sqrt{2 \log(u) \log(\eta_j(u)) +\log(\eta_j(u))^2}}{ \beta_j \log(u)}}\\
&\quad \times
\frac{\PP\left(\lambda_je^{\gamma_u \beta_jR}>u+ xe_j^*(u)\right)}{\widehat e_j\left(\left(u+xe^*_j(u)\right) (\lambda_j)^{-\frac{\log(\eta_j(u)) }{\log(u)}},u\right)}%\\&\times
\left(\frac{2 \log(u) \log(\eta_j(u)) +\log(\eta_j(u))^2}{(\log(u)+\log(\eta_j(u)) )^2}\right)^{\frac{d-3}2} (1-v_i^2)^{\frac{d-4}2}d v_i d x.
\end{align*}

 We remark that by \eqref{coEU}
$$
\limit{u}\frac{\widehat e_j(u,u)}{\widehat e_j\left(\left(u+xe^*_j(u)\right) (\lambda_j)^{-\frac{\log(\eta_j(u)) }{\log(u)}},u\right)}=1
$$
and using Lemma \ref{lemma:expbound} to get an integrable upper bound (note that $x\lesssim e^{k} u/e^*
 _j(u)$),  by \rE{the} bounded convergence theorem, we obtain
\begin{align*}
 \delta(u)&\sim \frac{2\Gamma(d/2)}{\pi \Gamma((d-2)/2)}\frac{\lambda_i}{\cD{\beta_j\gamma}} \left(\frac{u}{\lambda_j}\right)^{\frac{\beta_i \sigma_{ij}(u)}{ \beta_j}}\left(\frac{2e^*_j(u)}{u\log(u)}\right)^{\frac{d-1}2} \frac{\PP\left(\lambda_je^{\gamma_u \beta_jR}>u\right)}{e^*_j\left(u\right)}
\\&\quad\times\int_{0}^\infty \int_{-1}^1 e^{ \sqrt{x}v \frac{\beta_i\sqrt{2c_0\left(1-\sigma_{ij}^2\right)} }{ \beta_j}} e^{-x}x^{\frac{d-3}2} (1-v^2)^{\frac{d-4}2}d v d x, \quad u\to \IF.
\end{align*}
Using Euler's duplication formula
$$ \Gamma(s) \Gamma(s+1/2)=  \sqrt{\pi} 2^{1- 2s} \Gamma(2s)$$
for $n\ge 0$ we have
$$ \Gamma(n+1/2 ) \Gamma(n+1)=  \sqrt{\pi} 2^{1- 2(n+1/2)} \Gamma(2n+1)= \sqrt{\pi} 4^{-n}\Gamma(2n+1),$$
hence for the last integral above we may further write (\rE{below} $I_{\{\cdot\}}$ stands for the indicator function and
$q:= \beta_i\sqrt{2\cE{c_{\cD{j}}}\left(1-\sigma_{ij}^2\right)}/ \beta_j$)
\begin{align*}
&\int_{0}^\infty \int_{-1}^1 e^{ \sqrt{x}v q} e^{-x}x^{\frac{d-3}2} (1-v^2)^{\frac{d-4}2}d v d x\\
&=\sum_{n=0}^\infty \frac{q^n} { n!} \int_{0}^\infty \int_{-1}^ 1 e^{-x}x^{\frac{d+n-3}2}
v^n  (1-v^2)^{\frac{d-4}2} d v d x\\
&=\sum_{n=0}^\infty \frac{q^n} { n!} \frac{\Gamma\left(\frac{n+1}2\right)\Gamma\left(\frac{d-2}2\right)}{\Gamma\left(\frac{d+n-1}2\right)}\Gamma\left(\frac{d+n-1}2\right) I_{\{n=0 \mod 2\}}\\
&=\Gamma\left(\frac{d-2}2\right)\sum_{n=0}^\infty q^{2n}  \frac{\Gamma\left(\frac{2n+1}2\right)}{\Gamma(2n+1)}\\
&=\sqrt{\pi} \Gamma\left(\frac{d-2}2\right) \sum_{n=0}^\infty (q^2/4)^n\frac{1}{\Gamma(n+1)}=\sqrt{\pi} \Gamma\left(\frac{d-2}2\right) \exp\left(q^2/4 \right).
\end{align*}
Consequently, the claim follows by Lemma \ref{remarkunivariateasymptotic}.
\QED

\end{document}